\newtheorem{theorem}{Theorem}[section]
\newtheorem{definition}[theorem]{Definition}
\newtheorem{rem}[theorem]{Remark}
\newtheorem{proposition}[theorem]{Proposition}
\newtheorem{ex}[theorem]{Example}
\newtheorem{claim}[theorem]{Claim}
\newcommand{\R}{{\Bbb R}}
\def\j1n{j=1,\dots,n}
\def\j1m{j=1,\dots,m}
\def\i1np1{\in +1}
\def\R{\mathbb{R}}
\def\rn{\mathbb{R}^n}
\def\i1np1{\in +1}
\def\R{\mathbb{R}}
\def\rn{\mathbb{R}^n}
\def\u1{u^{(1)}}
\def\h1{h^{(1)}}
\newcommand{\la}{\lambda}
\newcommand{\al}{\alpha}
\newcommand{\ga} {\gamma}
\newcommand{\Ga} {\Gamma}
\newcommand{\om}{\omega}
\newcommand{\RR}{{\mathbb R}}
\newcommand{\pa}{\partial}
\newcommand{\lims} {\lim\limits}
\newcommand{\mis}{\mbox{\rm mis }}
\begin{document}

\begin{title}{ On variational problems related to steepest descent curves
 and self dual convex sets on the sphere}
\end{title}

\author{Marco Longinetti\footnote{marco.longinetti@unifi.it, Dipartimento DiMaI,
Universit\`a degli Studi di Firenze, V.le Morgagni 67,  50134
Firenze - Italy, cell. 3204324446}\\
Paolo Manselli\footnote{paolo.manselli@unifi.it, 
Firenze - Italy, cell 3473126658}\\
Adriana Venturi\footnote{adriana.venturi@unifi.it, Dipartimento
GESAAF, Universit\`a degli Studi di Firenze, P.le delle Cascine 15,
50144 Firenze - Italy, cell 3204324449.}}
\date{}
\maketitle

\begin{small}{\bf Abstract. } Let $\mathcal{C}$ be the family of
compact convex subsets $S$
 of the hemisphere in $\rn$  with the pro\-perty that
$S$ contains its dual $S^*;$ let $u\in S^*$, and let $
\Phi(S,u)=\frac{2}{\omega_n}\int_{ S}\langle  \theta , u \rangle
\,\, d\sigma(\theta). $ The problem to study $ \inf \big\{\Phi(S,u),
 S \in \mathcal{C}, \, u\in S^* \big\} $ is considered. It is proved that the
minima of $ \Phi  $ are sets of constant width $ \pi/2 $ with $ u $
on their boundary. More can be said for $n=3$: the minimum set is a
Reuleaux triangle on the sphere. The previous  problem is related to
the one to find the maximal length of steepest descent curves for
quasi convex functions, satisfying suitable constraints. For $ n=2 $
let us refer to \cite{Manselli-Pucci}. Here  quite different results are obtained for $ n\geq 3$.

\end{small}

2010 \emph{Mathematics  Subject Classifications. Primary }52A20;
 \emph{ Secondary} 52A10, 52A38, 49K30.

\emph{Key words and phrases.} Convex sets;  constant width  on
the sphere; steepest descent curves, self dual cones.

\section{Introduction}

Let $\Ga$ be the family of the rectifiable curves $\ga\subset \RR^n
\; (n \geq 2),$  satisfying a.e.:
   \begin{equation}\label{diffsep}
  \langle x'(s), x(s) -x(\sigma) \rangle  \geq 0 ,
   \quad \sigma \leq s ; \quad s,\sigma \in T
 \end{equation}
where   $x(s)$ is the curvilinear abscissa representation of $\ga,\, T
= [0,||\ga||], ||\ga||$ is the length of $\ga$.
 The curves $\ga$ satisfying  \eqref{diffsep} turn out
to be `` steepest descent curves'' for suitable quasi convex
functions. Further properties for these curves and related questions
were studied in \cite{Manselli-Pucci}, \cite{Daniilidis},
\cite{Daniilidis2} and also recently in \cite{Daniilidis3},
\cite{LMV}.

The mean width of a convex body (nonempty compact convex set) $K\subset
\rn$ will be denoted by $w(K)$. The convex hull of a set $ E $ will
be denoted by $co(E)$. Let $W$ be a positive number and $ \Ga_W $ be
the family of curves in $\Ga$ such that $w(co(\ga))\leq W$. In \S
\ref{pre} properties of these curves are recalled.

In the present work variational problems related to the family $\Ga_W$ are studied.

Let $\mathcal{C}$ be the family of compact convex subset $S$
 of the hemisphere in $\rn$ (in what follows, for simplicity they will be called sectors)
 with the property that
$S$ contains its dual $S^*;$ let $u\in S^*$, and let
$$
\Phi(S,u)=\frac{2}{\omega_n}\int_{ S}\langle  \theta , u \rangle
\,\, d\sigma(\theta).
$$
The first variational problem is to study
\begin{equation}\label{I}
 \mathcal{I} = \inf \big\{\Phi(S,u), \, S \in \mathcal{C}, \,
u\in S^* \big\}.
\end{equation}

 In \S  \ref{varprobcones} it is proved that $ \mathcal{I}>0,  $ and it is a
 minimum; a minimizing couple $ (S_0,u_0) $ satisfies $ S_0=S_0^*$
 (this is equivalent to the fact that
 $S$ has constant width  $\pi/2 $ on the sphere),  $ u_0 \in \partial S_0^*$. It is
 noticed that the case $ n=2 $ is trivial: the minimizing $ S_0 $
 is an arc of the unit circle of length $ \pi/2 $ and $ u_0 $ is  one of its extreme vectors.

 In \S  \ref{constwidthonS},  for  $ n=3,$ a sharper result is proved,
namely that $( S_0, u_0) $ is a minimizing couple for $ \Phi $ if and
only if $S_0$ is a Reuleaux triangle on the sphere
with $u_0$ one of its vertices; thus
$$ \min \big\{\Phi(S,u), \, S \in \mathcal{C}, \,
u\in S^* \big\} =  \Phi(S_0,u_0)=1/8.$$
 The proof
uses the   result of Blaschke-Lebesgue Theorem on $S^2$ on the
sectors of constant mean width
(\cite{Leichtweiss}). For $n > 3$  finding the  value of
$\mathcal{I}$ is an open problem.

The relation of previous problem with steepest descent curves was
hinted in \cite{Manselli-Pucci} and made more precise in \S
\ref{varprobconesfin}.

Let us define
$$\ga(s)=\{x(\sigma), 0 \leq \sigma \leq s \}$$
and
\begin{equation}\label{w(ga_s)}
 w_\ga(s)=w(co(\ga(s))).
\end{equation}
Let $N(x(s))$ be the normal cone to $co(\ga(s))$ at $x(s)$ and
$\widehat{N(x(s))}=N(x(s))\cap S^{n-1}$ is the related sector.

In  \cite{LMV} it is proved that for $ \ga
\in \Ga, $ a.e. $0 \leq s \leq ||\ga||$,
\begin{equation}\label{infdw/ds}
 \frac{d}{ds}w_\ga(s) =\Phi(\widehat{N(x(s))}, x'(s)).
\end{equation}
Thus:
\begin{equation}\label{ine}
 \inf_{\ga \in \Ga} \big \{ \frac{d}{ds}w_\ga(s) :  0\leq s \leq
 ||\ga|| \big \} \; \geq \;  \min \big\{\Phi(S,u), \, S \in \mathcal{C}, \,
u\in S^* \big\},
\end{equation}
then, $\Phi(\widehat{N(x(s))}, x'(s))$ has an infimum depending on $n$ only.

 In \cite{Manselli-Pucci}, for $n=2$, it was found a
$C^1$ curve $\ga_0$ for which  \eqref{ine}  (for every $s>0$) is an
equality.

Therefore it was natural to look for a similar result  for $ n=3.$
The results obtained are  different. In Theorem \ref{bad}
it is proved that
 $$\inf_{ \ga \in \Gamma_W } \frac{d}{ds}w_\ga(s) > \Phi(S_0, u_0),$$
(with $u_0$ one of the vertices of the Reuleaux triangle $ S_0$)
  holds a.e. in $[0, ||\ga||]$.

 However (Theorem \ref{bastard}) a curve  $\ga_0\in \Ga_W$ can be constructed so that there
 exist $ s_0 \in (0,||\ga_0||) $ and a sequence $s_k \rightarrow s_0$ with the property
 $$\lims_{k\to \infty} \frac{d}{ds}w_{\ga_0}(s_k)=\Phi(S_0,u_0)=\frac18.$$

 In \S \ref{varprobconesfin} an existence theorem is proved  for  the maximum of
 variational problems related to steepest descent curves; let
$$
\Psi(\ga,\al,  K)= \dfrac{||\ga||}{(w(K))^\al},\quad \ga\in\Ga,
\quad \ga \subset K, \quad 0 \leq \al \leq 1.
$$
In \cite{Manselli-Pucci}(see also \cite{Daniilidis3}, \cite{LMV}) it
was proved that, if $ w(K)$ is bounded a priori, then   $\Psi(\ga,
1, co(\ga))$ has an upper bound depending on $n$ only. Here is
proved that: if $K$ satisfies a constraint  implying that $w(K)$ is
bounded, then $\Psi(\ga, \al,  K)$ has a maximum.

\section{Preliminaries and definitions}\label{pre}
Let
$$
B(z,\rho)=\{x\in\rn\,:\,|x-z|<\rho\}\,,\quad\,
S^{n-1}=\partial B(0,1) \, \quad n\geq 2.
$$

A nonempty,  compact convex set $K$ of $\rn$ will be called  a {\em
convex body}.
$\pa K$ denotes
the boundary of $K$, $cl(K)$  is    the closure of $K$.

For  a convex body $K\subset \rn$, the {\em support function} is
defined by

\begin{equation*}
\label{equation1} H_K(x)=\sup_{y\in K}\langle x,y\rangle\,,\quad
x\in\mathbb{R}^n,
\end{equation*}
where $\langle \cdot, \cdot \rangle$ denotes the  scalar
product in $\mathbb{R}^n$. The restriction of $H_K$ to $S^{n-1}$ will be  denoted by
$h_K$.

 The width of a convex set $K$  in a
direction $\theta \in S^{n-1}$ is the distance between the two
hyperplanes  orthogonal to $\theta$ and  supporting $K$,  given by
$h_K(\theta)+ h_K(-\theta)$. The {\em mean width } $w(K)$ of $K$ is
the mean of this distance on $S^{n-1}$ with respect to the spherical
Lebesgue measure $\sigma$, i.e.
\begin{equation}\label{meanwidthdef}
w(K)= \dfrac{1}{\omega_n}\int_{S^{n-1}} \left( h_K(\theta)+ h_K(-\theta)\right)\,
d\sigma= \dfrac{2}{\omega_n} \int_{S^{n-1}}  h_K(\theta)\, d\sigma
\end{equation}
where $\omega_n=2\pi^{n/2}/\Gamma(n/2)$ is the  measure  of   $S^{n-1}$.

Let $K$ be a convex body and $q\in  K$; the {\em normal cone } at
$q$ to $K$ is the closed convex cone

\begin{equation}\label{normalcone} N_K(q)=\{x\in\rn: \langle x,y-q\rangle \le 0 \quad
\forall y \in K\}.
\end{equation}

\begin{definition}
Let $K$ be a convex body  and $p$ be a point not in K. A  simple
cap body $K^p$ is:
\begin{equation}\label{capbody}
K^p=\bigcup_{0\le \lambda \le 1}\{ \la K+(1-\la)p\}=co(K\cup\{p\}).
\end{equation}
\end{definition}

Cap bodies properties can be found in \cite{Bonnfen}; if $ p \in K,
$ let us define $K^p=K$. When $N$ is a cone let  $\widehat{N}$ be
the {\em sector} $N\cap S^{n-1}$ associated to $N$.
The differential properties of $p \to w(K^p)$  has  been investigated in $\RR^3$ in
 \cite[Satz VI]{Stoll} and in $ \RR^n $ in \cite[Theorem 2.4]{LMV}.
\begin{proposition}\label{main1}
$w(K^p)$ is a convex function of $p$ and for $p\not  \in \pa K$ it is differentiable  with
$$\nabla w(K^p)=\frac{2}{\omega_n}\int_{\widehat{N(p)}}\theta\, d\sigma ,$$
where $N(p)=N_{K^p}(p)$.
\end{proposition}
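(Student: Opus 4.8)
The plan is to reduce the whole statement to the elementary description of the support function of a cap body and then to differentiate the mean-width integral under the integral sign. First I would record that, since $K^p=\co(K\cup\{p\})$, the support function of a convex hull is the pointwise maximum of the support functions of the pieces, whence
\[
h_{K^p}(\theta)=\max\{h_K(\theta),\,\langle p,\theta\rangle\},\qquad \theta\in S^{n-1}.
\]
Substituting this into \eqref{meanwidthdef} gives
\[
w(K^p)=\frac{2}{\omega_n}\int_{S^{n-1}}\max\{h_K(\theta),\,\langle p,\theta\rangle\}\,d\sigma(\theta).
\]
Convexity is then immediate: for each fixed $\theta$ the integrand $p\mapsto\max\{h_K(\theta),\langle p,\theta\rangle\}$ is a maximum of two affine functions of $p$, hence convex, and an integral of convex functions over the parameter $\theta$ is convex. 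This settles the first assertion.

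For the gradient I would differentiate under the integral sign. Writing $g(p,\theta)=\max\{h_K(\theta),\langle p,\theta\rangle\}$, for every $\theta$ with $\langle p,\theta\rangle\neq h_K(\theta)$ the map $p\mapsto g(p,\theta)$ is differentiable, with $\nabla_p g(p,\theta)=\theta$ when $\langle p,\theta\rangle>h_K(\theta)$ and $\nabla_p g(p,\theta)=0$ when $\langle p,\theta\rangle<h_K(\theta)$. Moreover $g(\cdot,\theta)$ is $1$-Lipschitz in $p$ uniformly in $\theta$ (its slope is either $0$ or $\theta$, and $|\theta|=1$), so the difference quotients $t^{-1}(g(p+tv,\theta)-g(p,\theta))$ are dominated by the constant $|v|\in L^1(S^{n-1},\sigma)$. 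By dominated convergence the directional derivative of $p\mapsto w(K^p)$ in direction $v$ equals $\frac{2}{\omega_n}\int_{\{\langle p,\theta\rangle>h_K(\theta)\}}\langle v,\theta\rangle\,d\sigma(\theta)$, which is linear in $v$; since the function is convex, Gateaux differentiability upgrades to genuine differentiability in finite dimensions, so
\[
\nabla w(K^p)=\frac{2}{\omega_n}\int_{\{\theta:\ \langle p,\theta\rangle>h_K(\theta)\}}\theta\,d\sigma(\theta).
\]
It remains to identify the domain of integration with $\widehat{N(p)}$: from \eqref{normalcone}, $\theta\in N_{K^p}(p)$ iff $h_{K^p}(\theta)=\langle p,\theta\rangle$, i.e. iff $\langle p,\theta\rangle\ge h_K(\theta)$, so $\widehat{N(p)}=\{\theta\in S^{n-1}:\langle p,\theta\rangle\ge h_K(\theta)\}$.

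The step that needs real care, and the one I regard as the main obstacle, is showing that the kink set $B=\{\theta\in S^{n-1}:\langle p,\theta\rangle=h_K(\theta)\}$, on which the integrand is not differentiable and which distinguishes the strict-inequality domain above from the closed sector $\widehat{N(p)}$, is $\sigma$-negligible precisely when $p\notin\partial K$. If $p\in\inte K$ then $\langle p,\theta\rangle<h_K(\theta)$ for all $\theta$, so $B=\emptyset$ and $\nabla w(K^p)=0$, consistent with $K^p=K$ being constant there. If $p\notin K$, I would pass to the positively homogeneous function $F(x)=H_K(x)-\langle p,x\rangle$ on $\rn$: it is convex and sublinear, $N(p)=\{F\le 0\}$ is a closed convex cone whose interior is $\{F<0\}$ (nonempty, since $p\notin K$ can be strictly separated from $K$), so its topological boundary is exactly $\{F=0\}$.

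Being the boundary of a convex set in $\rn$, the set $\{F=0\}$ is $n$-dimensional Lebesgue-null; since it is a cone, the polar-coordinate identity $\mathrm{Leb}=\int r^{n-1}\,d\sigma\,dr$ forces $\sigma(\{F=0\}\cap S^{n-1})=\sigma(B)=0$. This is also exactly why $p\in\partial K$ must be excluded: there $p\mapsto w(K^p)$ has a genuine corner, being constant on $K$ and strictly increasing outward, and the two one-sided derivatives disagree. With $\sigma(B)=0$ the strict-inequality domain and $\widehat{N(p)}$ agree up to a null set, so the displayed integral over $\widehat{N(p)}$ is unambiguous and equals $\nabla w(K^p)$, completing the proof.
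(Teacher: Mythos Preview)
Your proof is correct. The paper does not actually prove this proposition: it simply cites \cite[Satz VI]{Stoll} for $\RR^3$ and \cite[Theorem 2.4]{LMV} for general $\RR^n$, so there is no in-paper argument to compare against. Your approach via the explicit support-function formula $h_{K^p}(\theta)=\max\{h_K(\theta),\langle p,\theta\rangle\}$, followed by differentiation under the integral sign with a uniform Lipschitz bound and dominated convergence, is the natural self-contained route. The one genuinely delicate step --- that the kink set $B=\{\theta\in S^{n-1}:\langle p,\theta\rangle=h_K(\theta)\}$ is $\sigma$-null for $p\notin\partial K$ --- you handle correctly: for $p\notin K$ you identify $B$ with $\partial N(p)\cap S^{n-1}$, use strict separation of $p$ from $K$ to guarantee that the cone $N(p)$ has nonempty interior, and then pass from Lebesgue-nullity of the boundary of a proper convex set to $\sigma$-nullity via polar coordinates. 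One minor presentational remark: the assertion that the directional derivative equals $\frac{2}{\omega_n}\int_{\{\langle p,\theta\rangle>h_K(\theta)\}}\langle v,\theta\rangle\,d\sigma$ and is ``linear in $v$'' already presupposes $\sigma(B)=0$, since otherwise the kink set contributes the nonlinear term $\int_B\langle v,\theta\rangle^+\,d\sigma$; so strictly speaking the null-set argument should precede that line rather than follow it. You flag this explicitly as the main obstacle, so the logic is all present, just slightly out of order.
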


The {\em dual cone} $C^*$ of a  closed convex cone $C$ is
$$C^*=\{y\in \rn : \langle y,x \rangle \ge 0 \quad \forall x \in C\}.$$
The dual cone $C^*$ is a closed and convex cone.
A cone is said to
be {\em self-dual }if $C=C^*$. The nonnegative orthant of $\rn$,
$\rn_+=\{x=(x_1,\ldots, x_n): x_i\ge 0, i=1, \ldots, n \}$ and the
circular cone $C_+=\{x=(x_1,\ldots, x_n): x_n^2=\sum_1^{n-1}
x_i^2\}$ are self-dual. For $n=2$ the self-dual cones are  rotations of
the quadrants only; for $n \geq 3 $ the self-dual cones are related
to the convex sets on the sphere of constant width $\pi/2$.

The opening of a circular cone will be  the amplitude of the acute angle between the
axis and a generator half line. If $C$ is a circular cone of opening $\alpha$ then $C^*$
is a circular cone of opening $\pi/2-\alpha$.
The  {\em tangent cone}, or  support cone, of a convex body  K at a
point $q \in \pa K$  is given by
$$ T_K(q)=cl \{\bigcup_{y\in K} \{s( y-q): s \geq 0  \}\}.$$
It is well known that:
\begin{equation}\label{N=-T*}
(N_K(q))^*=-T_K(q).
\end{equation}

An extreme ray of a convex cone $C$, is a ray in $C$ that cannot be
expressed as a sum of two other rays in $C$. An extreme point of a
sector can be defined in a similar way.

Let us recall that the Hausdorff distance  between two convex bodies $A$ and $B$ can be
written as
$$dist(A,B)= \max_{\theta \in S^{n-1}}|h_A(\theta)-h_B(\theta)|$$
(see \cite[Theorem 1.8.11]{Schn}).


\begin{definition}\label{defgax}
Let $x_0=x(s_0)\in \ga$, let us denote
$$\ga_{x_0}=\ga(s_0):=\{x(s)\in \ga, \,  0\leq s \leq s_0\}.$$
 \end{definition}

The principal properties of the curves $\ga \in \Ga$  follow,
 see  \cite{Manselli-Pucci}, \cite[\S 4]{LMV}.
 \begin{proposition}\label{fondproprsdc} Let $x(t), t\in [0,1],$ be a  parametrization of a curve  $\ga$. Then, $\ga\in \Ga$ iff 
 \begin{equation}
  |x(t')-x(t'')| \leq |x(t')-x(t''')| \quad 0\leq t'\leq t''\leq t'''\leq 1.
 \end{equation}

 \end{proposition}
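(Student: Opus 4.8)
The plan is to pass to the arc-length parametrization $x(s)$, $s \in T = [0, \|\ga\|]$, and to recognize that the defining inequality \eqref{diffsep} of $\Ga$ and the three-point inequality in the statement are two expressions of the same monotonicity property. I would first note that both conditions are invariant under a monotone reparametrization: replacing a general $x(t)$, $t \in [0,1]$, by the arc-length one only rescales the parameter, an ordering $t' \le t'' \le t'''$ corresponds to $s' \le s'' \le s'''$, and \eqref{diffsep} changes only by a nonnegative factor. Hence it suffices to argue in arc length. The key object I would introduce is, for each fixed $\sigma \in T$, the scalar function
$$ f_\sigma(s) = |x(s)-x(\sigma)|^2, \qquad s \ge \sigma. $$
Since $\ga$ is rectifiable, its arc-length parametrization is $1$-Lipschitz, so $x$ is absolutely continuous, differentiable a.e., and $f_\sigma$ is absolutely continuous with $f_\sigma'(s) = 2\langle x'(s), x(s)-x(\sigma)\rangle$ wherever $x'(s)$ exists. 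Because $t \mapsto \sqrt{t}$ is increasing, the three-point inequality is exactly the assertion that every $f_\sigma$ is nondecreasing on $[\sigma, \|\ga\|]$, which is what I would aim to match against \eqref{diffsep}.

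For the direction $\ga \in \Ga \Rightarrow$ three-point inequality, I would fix $\sigma$, use \eqref{diffsep} to get $f_\sigma'(s) \ge 0$ for a.e. $s \ge \sigma$, and integrate the absolutely continuous $f_\sigma$ to obtain $f_\sigma(s'') \le f_\sigma(s''')$ for $\sigma \le s'' \le s'''$; taking $\sigma = t'$, $s'' = t''$, $s''' = t'''$ gives the claim. For the converse I would start from the hypothesis that each $f_\sigma$ is nondecreasing, let $E \subset T$ be the full-measure set where $x'$ exists, and observe that at any $s_0 \in E$ and any $\sigma \le s_0$ the function $f_\sigma$ is differentiable at $s_0$ and nondecreasing, so $f_\sigma'(s_0) = 2\langle x'(s_0), x(s_0)-x(\sigma)\rangle \ge 0$, which is \eqref{diffsep}.

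The step I expect to require the most care is the order of the quantifiers in \eqref{diffsep}: one needs the inequality to hold, at a single almost-every point $s_0$, simultaneously for all $\sigma \le s_0$. The plan is to resolve this by exploiting that the exceptional null set (the complement of $E$, where $x'$ fails to exist) does not depend on $\sigma$; once $s_0 \in E$ is fixed, $f_\sigma$ is differentiable at $s_0$ for every $\sigma$, so the sign of the derivative can be read off for all admissible $\sigma$ at once, and no countable exhaustion over $\sigma$ nor continuity-in-$\sigma$ argument is needed. If one instead wished to phrase \eqref{diffsep} as holding for a.e.\ pair $(s,\sigma)$, the same function $f_\sigma$ together with Fubini's theorem would close the gap.
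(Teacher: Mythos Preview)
The paper does not supply its own proof of this proposition; it is stated as a known fact with a pointer to \cite{Manselli-Pucci} and \cite[\S 4]{LMV}. Your argument via $f_\sigma(s)=|x(s)-x(\sigma)|^2$ is the standard one and correctly establishes the equivalence between the differential condition \eqref{diffsep} and the three-point monotonicity, including the quantifier subtlety you flag: the null set where $x'$ fails to exist is indeed independent of $\sigma$, so once $s_0$ lies in the full-measure set $E$ you get \eqref{diffsep} for every $\sigma\le s_0$ at once.

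There is, however, one genuine gap. In the direction ``three-point $\Rightarrow \ga\in\Ga$'' you pass at the very start to the arc-length parametrization, and later write ``since $\ga$ is rectifiable\ldots''. But by the paper's definition $\Ga$ consists of \emph{rectifiable} curves satisfying \eqref{diffsep}, so rectifiability is part of what has to be shown, not something you may assume. The three-point inequality is exactly the self-contracted condition (with the orientation reversed), and rectifiability of bounded self-contracted curves in $\rn$ is a nontrivial theorem---in fact the main content of the references the paper cites here (\cite{Daniilidis}, \cite{Daniilidis3}, \cite{LMV}). You should either invoke that result explicitly to obtain rectifiability before reparametrizing by arc length, or state your equivalence only for curves already known to be rectifiable; under the latter restriction your proof is complete and matches what the cited references do.
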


\begin{proposition}\label{proprsdc} If $\ga\in \Ga$ and $x\in\ga$,
  then for any $p,q \in co(\ga_x)\setminus \{x\}$
\begin{equation}\label{angcond}
\langle p-x,q-x\rangle >0,
\end{equation}
and any two half lines from $x\in \ga$ in the tangent cone at $co(\ga_x)$ are the
sides of an  angle  less than or equal to $\pi/2$; moreover
\begin{equation}\label{NpsubsetTp}
N_{co(\ga_x)}(x)\supseteq (N_{co(\ga_x)}(x))^* = -T_{co(\ga_x)}(x).
\end{equation}
\end{proposition}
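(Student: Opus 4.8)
The plan is to prove Proposition \ref{proprsdc} in three stages, extracting everything from the defining inequality \eqref{diffsep} (equivalently its global form in Proposition \ref{fondproprsdc}) together with the duality relation \eqref{N=-T*}.

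First I would establish the angle condition \eqref{angcond}. The key observation is that \eqref{diffsep} says the velocity $x'(s)$ makes a nonobtuse angle with every chord $x(s)-x(\sigma)$ pointing backward along the curve. Taking $p,q\in\co(\ga_x)\setminus\{x\}$, I would first prove \eqref{angcond} for points $p=x(\sigma_1),q=x(\sigma_2)$ actually lying on the curve with $\sigma_1,\sigma_2\le s$ (where $x=x(s)$), by integrating \eqref{diffsep}: writing $p-x=-\int_{\sigma_1}^{s}x'(\tau)\,d\tau$ and pairing against the chord condition, one sees that the backward chords from $x$ all lie in an acute half-cone around $-x'(s)$, forcing $\langle p-x,q-x\rangle\ge 0$, with strict inequality once $p\ne x\ne q$ by the rectifiability/monotonicity in Proposition \ref{fondproprsdc}. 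To pass from points on $\ga_x$ to arbitrary points of the convex hull $\co(\ga_x)$, I would use bilinearity of the inner product: a generic $p\in\co(\ga_x)$ is a convex combination $\sum_i\la_i x(\sigma_i)$, so $p-x=\sum_i\la_i(x(\sigma_i)-x)$, and likewise for $q$; expanding $\langle p-x,q-x\rangle$ as a nonnegative combination of the pairwise products $\langle x(\sigma_i)-x,x(\sigma_j)-x\rangle$ keeps positivity.

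Second, the statement that any two half-lines in the tangent cone $T_{\co(\ga_x)}(x)$ subtend an angle $\le\pi/2$ is just the closed, homogeneous reformulation of \eqref{angcond}: every generator of $T_{\co(\ga_x)}(x)$ is a limit of directions $(y-x)/|y-x|$ with $y\in\co(\ga_x)$, so \eqref{angcond} passes to the closure as $\langle v,w\rangle\ge 0$ for all $v,w\in T_{\co(\ga_x)}(x)$, which is exactly the bound on the angle.

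Third, for the cone inclusion \eqref{NpsubsetTp}, I would start from the general identity \eqref{N=-T*}, which gives $(N_{\co(\ga_x)}(x))^*=-T_{\co(\ga_x)}(x)$; this is the middle equality and needs no work beyond quoting it. The content is the inclusion $N_{\co(\ga_x)}(x)\supseteq -T_{\co(\ga_x)}(x)$. Here I expect the main obstacle to lie: I would show that the tangent cone $T:=T_{\co(\ga_x)}(x)$ being acute (all pairwise inner products of its elements nonnegative, from step two) implies $T\subseteq T^*$, whence dualizing and using \eqref{N=-T*} yields $N=-T^*\supseteq -T$, i.e. the claimed inclusion after rewriting $-T=(N)^*$. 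The acuteness $\langle v,w\rangle\ge0$ for all $v,w\in T$ says precisely that every element of $T$ lies in the dual cone $T^*$, so $T\subseteq T^*$ is immediate; taking duals reverses the inclusion to $T^*\subseteq T$, and combining with $-T=N^*$ and $-T^*=N$ (both from \eqref{N=-T*}) gives $N\supseteq N^*$, which is \eqref{NpsubsetTp}. The only delicate point is checking that the self-dual/acute bookkeeping with the sign changes is consistent, so I would write the chain of inclusions explicitly to confirm the direction of each dualization.
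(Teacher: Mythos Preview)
The paper does not prove Proposition~\ref{proprsdc}; it records it as a known fact with references to \cite{Manselli-Pucci} and \cite[\S 4]{LMV}. So there is no in-paper argument to compare against, and I comment instead on the soundness of your proposal.

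The overall plan is correct, and step two as well as the convex-hull extension in step one are fine. Two points need repair.

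First, your derivation of \eqref{angcond} for points on the curve is not correct as written. The inequality \eqref{diffsep} evaluated at the single parameter value $s$ only says that each backward chord $x(\sigma)-x(s)$ makes an angle at most $\pi/2$ with $-x'(s)$; two such vectors can still be nearly opposite, so there is no ``acute half-cone around $-x'(s)$'' in which they all must lie. The clean route is the one you actually cite at the end of the sentence: use Proposition~\ref{fondproprsdc} directly. For $\sigma_1\le\sigma_2<s$ it gives $|x(\sigma_1)-x(\sigma_2)|\le |x(\sigma_1)-x(s)|$; setting $a=x(\sigma_1)-x(s)$, $b=x(\sigma_2)-x(s)$ and expanding $|a-b|^2\le|a|^2$ yields $\langle a,b\rangle\ge\tfrac12|b|^2>0$. (Equivalently, integrating \eqref{diffsep} in $\tau$ over $[\sigma_2,s]$ with $\sigma=\sigma_1$ fixed reproduces exactly this distance monotonicity, not a half-cone statement.) From here your bilinearity argument carries the strict inequality to $\co(\ga_x)\setminus\{x\}$.

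Second, in step three the phrase ``taking duals reverses the inclusion to $T^*\subseteq T$'' is a slip: dualizing $T\subseteq T^*$ returns $(T^*)^*\subseteq T^*$, i.e.\ $T\subseteq T^*$ again, so nothing new is obtained; and if one \emph{did} have $T^*\subseteq T$, combining it with $-T=N^*$ and $-T^*=N$ would produce $N\subseteq N^*$, the wrong direction. The argument you actually need is simply to negate $T\subseteq T^*$ to get $-T\subseteq -T^*$ and read this through the identities $N^*=-T$ and $N=-T^*$ (the latter obtained by dualizing \eqref{N=-T*}) as $N^*\subseteq N$. Drop the spurious dualization step and the chain is correct.
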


Let  $\ga \in \Ga$. The path
$\ga$ has a one-to-one continuous parametrization $x(w)$,  inverse of
\begin{equation}\label{pargaw}
w(x):=w(co(\ga_x))\in [0, w(co(\ga))].
\end{equation}

\begin{proposition}\label{selfsecrettifiable} Let $\ga\in \Gamma$.
 Then   $\ga, $ parameterized by the mean width
function \eqref{pargaw} is Lipschitz continuous, and a.e.
\begin{equation}\label{dx/dw<=c(n)}
 |\dfrac{dx}{dw}| \leq c^{(1)}_n,
\end{equation}
where $ c^{(1)}_n $ is  a constant  depending only on the dimension
$n$. In particular $$ ||\ga|| \leq c^{(1)}_n w(co(\ga)).$$ Moreover
$ c^{(1)}_2 = \pi $ (best possible constant) and
 $$
  c^{(1)}_n \leq (n-1) \cdot n^{n/2} \frac{\omega_n}{\omega_{n-1}} .$$
\end{proposition}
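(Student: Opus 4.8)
The plan is to parameterize $\ga$ by arclength $s$, compute the rate of growth of the mean width $w_\ga(s)=w(co(\ga(s)))$ (see \eqref{w(ga_s)}) along the curve, and show this rate is bounded below by a positive dimensional constant; since $|dx/ds|=1$, the reciprocal of that lower bound is exactly the Lipschitz constant for the mean-width parameterization.

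First I would record the two structural facts about the tip $x=x(s)$ that make $(\widehat N,x'(s))$ an admissible pair for the functional $\Phi$. Writing $N=N_{co(\ga_x)}(x)$ and $T=T_{co(\ga_x)}(x)$, Proposition \ref{proprsdc} gives $N\supseteq N^{*}=-T$, hence $\widehat N\supseteq\widehat N^{*}$. Moreover each chord $x(s)-x(\sigma)$ with $\sigma<s$ lies in $-T=N^{*}$ (it is the negative of a direction $x(\sigma)-x(s)$ pointing into $co(\ga_x)$), so letting $\sigma\uparrow s$ and using that $N^{*}$ is closed gives $x'(s)\in N^{*}$. Consequently $\langle\theta,x'(s)\rangle\ge 0$ for every $\theta\in\widehat N$, and $(\widehat N,x'(s))$ satisfies the constraints of problem \eqref{I}.

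Next I would differentiate the mean width. Applying Proposition \ref{main1} to $K=co(\ga_x)$ and $p=x$, the directional derivative of $p\mapsto w(K^{p})$ in the admissible direction $x'(s)$ is $\langle\nabla w(K^{p}),x'(s)\rangle=\frac{2}{\omega_n}\int_{\widehat N}\langle\theta,x'(s)\rangle\,d\sigma=\Phi(\widehat N,x'(s))$, which is the a.e. identity \eqref{infdw/ds}. Because the integrand is nonnegative, $\frac{d}{ds}w_\ga(s)=\Phi(\widehat N,x'(s))\ge 0$; once the strict lower bound below is available this derivative is bounded away from $0$, which both legitimizes the change of variables to the parameter $w$ (so $\ga$ is strictly monotone in $w$) and gives, a.e., $|dx/dw|=1/\Phi(\widehat N,x'(s))$. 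Thus the Lipschitz claim and the pointwise bound $|dx/dw|\le c^{(1)}_n$ reduce to the single inequality $\Phi(\widehat N,x'(s))\ge 1/c^{(1)}_n$ holding uniformly over admissible pairs; then $x(w)$ is $c^{(1)}_n$-Lipschitz (hence a.e. differentiable), and integrating $|dx/dw|\le c^{(1)}_n$ over $[0,w(co(\ga))]$ yields $||\ga||\le c^{(1)}_n w(co(\ga))$.

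It remains to bound $\Phi(\widehat N,u)$ from below for every obtuse cone $N\supseteq N^{*}$ and unit $u\in N^{*}$, and this is the \emph{main obstacle}. The normalization is fixed by the hemisphere, $\int_{\{\langle\theta,u\rangle\ge0\}}\langle\theta,u\rangle\,d\sigma=\frac{\omega_{n-1}}{n-1}$, so the target $\Phi(\widehat N,u)\ge\frac{\omega_{n-1}}{(n-1)n^{n/2}\omega_n}$ is equivalent to saying that $\widehat N$ captures at least the fraction $\frac{1}{2n^{n/2}}$ of this $\langle\cdot,u\rangle$-weighted hemispherical mass. Here the hypothesis $N\supseteq N^{*}$ is used to force $\widehat N$ to be large: one inscribes in $N$ a spherical cap of angular radius $\rho_n$ with $\sin\rho_n\gtrsim n^{-1/2}$, placed so that $\langle\theta,u\rangle$ stays bounded away from $0$ on it, and estimates $\int_{\mathrm{cap}}\langle\theta,u\rangle\,d\sigma\gtrsim\frac{\omega_{n-1}}{n-1}(\sin\rho_n)^{n-1}$; the factor $(\sin\rho_n)^{-(n-1)}\sim n^{(n-1)/2}$ is what produces the $n^{n/2}$ in $c^{(1)}_n$. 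Extracting the precise inscribed-cap estimate (equivalently, proving $\mathcal I>0$ with this explicit constant) is the delicate point. The same constant can be reached by the complementary route $||\ga||\le 2n^{n/2}\,\diam(co(\ga))$ for self-expanding curves, combined with the elementary $\diam(K)\le\frac{(n-1)\omega_n}{2\omega_{n-1}}w(K)$ (a body of diameter $d$ contains a segment of length $d$, which minimizes mean width, and that segment has mean width $\frac{2\omega_{n-1}}{(n-1)\omega_n}d$); their product is exactly $(n-1)n^{n/2}\omega_n/\omega_{n-1}$, which serves as a useful cross-check. Finally, for $n=2$ the admissible $\widehat N$ are the arcs of length $\ge\pi/2$ with $u\in\widehat N^{*}$; minimizing $\int\langle\theta,u\rangle\,d\sigma$ over this family gives the value $1$, attained by a quarter-circle with $u$ an endpoint, so $\inf\Phi=\frac{2}{\omega_2}\cdot 1=1/\pi$ and $c^{(1)}_2=\pi$, the extremal curve of \cite{Manselli-Pucci} showing this constant is best possible.
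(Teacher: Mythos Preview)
The paper does not prove this proposition at all: it is quoted as a known result from \cite{Manselli-Pucci} and \cite[\S 4]{LMV}, so there is no ``paper's own proof'' to compare against. That said, your strategy is exactly the one underlying those references and, in fact, the one the present paper deploys later: compute $\dfrac{d}{ds}w_\ga(s)=\Phi(\widehat{N(x(s))},x'(s))$ (this is formula \eqref{MP}), observe that $(\widehat{N},x'(s))$ is an admissible pair for problem \eqref{I} by Proposition \ref{proprsdc}, and invert the uniform lower bound $\Phi\ge 1/c_n^{(1)}$. Your treatment of $n=2$ (the quarter-arc giving $\inf\Phi=1/\pi$, hence $c_2^{(1)}=\pi$) is the same computation as Remark \ref{rem3.3}.

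Two points deserve comment. First, a technical slip: you invoke Proposition \ref{main1} with $p=x(s)\in\pa\,co(\ga_x)$, but that proposition asserts differentiability only for $p\notin\pa K$. The identity \eqref{MP} at boundary points needs the extra work done in \cite{LMV} (convexity of $p\mapsto w(K^p)$ and one-sided derivatives along $\ga$); in this paper it is simply recorded as Proposition \ref{corIBDpath}, which you may cite directly instead of rederiving it. Second, the ``main obstacle'' you flag---the explicit lower bound $\Phi(S,u)\ge\dfrac{\omega_{n-1}}{\omega_n(n-1)}n^{-n/2}$---is precisely the inequality the paper quotes at the end of the proof of Theorem \ref{minfunoncones}, attributing it to \cite[Theorem VI]{Manselli-Pucci} via \cite{Santalo'}. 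Your cap-inscribing heuristic points in the right direction but is only a sketch (in particular, the claim that every $S\supseteq S^*$ contains a cap of angular radius $\arcsin n^{-1/2}$ \emph{centered where $\langle\theta,u\rangle$ stays bounded below} is not justified). Your alternative route, combining a length--diameter bound $\|\ga\|\le 2n^{n/2}\diam(co(\ga))$ with $\diam(K)\le\dfrac{(n-1)\omega_n}{2\omega_{n-1}}w(K)$, is essentially the original argument in \cite{Manselli-Pucci} and is the cleaner way to reach the stated constant; the mean-width/diameter inequality you wrote is correct and proved exactly as you indicate.
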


\begin{proposition}\label{corIBDpath} Let $\ga\in \Ga$ and let
 $N(x(s))=N_{co(\ga(s))}(x(s))$; for  almost every $s\in (0, ||\ga||),$
\begin{equation}\label{inNco(ga(t))}
{x'}(s)\in N^*({x(s)})\subset N({x(s)})
\end{equation}
holds and
\begin{equation}\label{MP}
 \frac{d}{ds}w_\ga(s)= \frac{2}{\omega_n}\int_{\widehat{N(x(s))}} \langle
\theta,{x'}(s) \rangle d \sigma.
\end{equation}

\end{proposition}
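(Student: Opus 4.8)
The plan is to prove the two assertions in turn, using the cone inclusion as a tool for the derivative formula. Throughout write $K(s)=co(\ga(s))$, a nondecreasing family of convex bodies, and recall that by Proposition \ref{selfsecrettifiable} the arc-length derivative $x'(s)$ exists for a.e. $s$.

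First I would settle the inclusion $x'(s)\in N^*(x(s))\subset N(x(s))$. The second inclusion is exactly \eqref{NpsubsetTp} applied at $x=x(s)$, where moreover $N^*(x(s))=-T_{K(s)}(x(s))$. For the first, fix $s$ at which $x'(s)$ exists and write $x'(s)=\lim_{h\to 0^+}\frac{x(s)-x(s-h)}{h}$. For every small $h>0$ the point $x(s-h)$ lies in $K(s)$, so $x(s-h)-x(s)\in T_{K(s)}(x(s))$ and hence $\frac{x(s)-x(s-h)}{h}\in -T_{K(s)}(x(s))$, the latter being a cone. Since $-T_{K(s)}(x(s))$ is closed, the limit $x'(s)$ lies in it, i.e. $x'(s)\in N^*(x(s))$. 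In particular, for any $\theta\in\widehat{N(x(s))}\subset N(x(s))$ we get $\langle\theta,x'(s)\rangle\ge 0$, a fact used below.

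For the derivative formula I would differentiate via support functions. Since $K(s)$ is increasing and moves Lipschitz-ly, $w_\ga(s)=w(K(s))$ is nondecreasing and Lipschitz, hence differentiable a.e.; by \eqref{meanwidthdef}, $w_\ga(s)=\frac{2}{\omega_n}\int_{S^{n-1}}h_{K(s)}(\theta)\,d\sigma(\theta)$ with $h_{K(s)}(\theta)=\max_{0\le\sigma\le s}\langle\theta,x(\sigma)\rangle$. Fix $s$ where both $x'(s)$ and $w_\ga'(s)$ exist and consider, for $s'>s$,
\[
\frac{w_\ga(s')-w_\ga(s)}{s'-s}=\frac{2}{\omega_n}\int_{S^{n-1}}\frac{h_{K(s')}(\theta)-h_{K(s)}(\theta)}{s'-s}\,d\sigma(\theta).
\]
The integrand satisfies $0\le h_{K(s')}(\theta)-h_{K(s)}(\theta)\le L(s'-s)$, where $L$ is the Lipschitz constant of $s\mapsto x(s)$ (equal to $1$ for the arc-length parametrization), so it is bounded by $L$ and dominated convergence is available. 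I then identify the pointwise limit. If $\theta\notin\overline{\widehat{N(x(s))}}$, then $x(s)$ does not support $K(s)$ in direction $\theta$, so $\delta:=h_{K(s)}(\theta)-\langle\theta,x(s)\rangle>0$; since $\langle\theta,x(\sigma)\rangle\le\langle\theta,x(s)\rangle+L(\sigma-s)$ for $\sigma\ge s$, the recent arc points cannot raise the support once $s'-s<\delta/L$, and the quotient is eventually $0$. If $\theta\in\inte\widehat{N(x(s))}$, then $h_{K(s)}(\theta)=\langle\theta,x(s)\rangle$ and, using $x(\sigma)=x(s)+(\sigma-s)x'(s)+o(\sigma-s)$ together with $\langle\theta,x'(s)\rangle\ge0$, one gets $h_{K(s')}(\theta)-h_{K(s)}(\theta)=(s'-s)\langle\theta,x'(s)\rangle+o(s'-s)$, so the quotient tends to $\langle\theta,x'(s)\rangle$. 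The exceptional set $\partial\widehat{N(x(s))}$ has $\sigma$-measure zero. Dominated convergence then yields the right-hand derivative $\frac{2}{\omega_n}\int_{\widehat{N(x(s))}}\langle\theta,x'(s)\rangle\,d\sigma$, which, since $w_\ga$ is differentiable at $s$, equals $w_\ga'(s)$; this is \eqref{MP}.

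The main obstacle is the uniform control required in the pointwise computation: one must check that the remainder $o(\sigma-s)$ in the expansion of $\langle\theta,x(\sigma)\rangle$ is uniform in $\theta\in S^{n-1}$ (this follows from $|\theta|=1$ and differentiability of $x$ at $s$) and that the mass of a shrinking neighborhood of $\partial\widehat{N(x(s))}$ vanishes in the limit. As an alternative route one may invoke the cap-body derivative of Proposition \ref{main1}: because $x'(s)\in -T_{K(s)}(x(s))$ points out of $K(s)$, moving the apex from $x(s)$ in direction $x'(s)$ produces cap bodies whose apex normal cone converges to $N(x(s))$, and the one-sided directional derivative of $p\mapsto w(K(s)^p)$ reproduces the same integral; there the corresponding technical point is to verify that the intermediate arc points $x(\sigma)$, $s<\sigma<s'$, do not alter $w(K(s'))$ to first order.
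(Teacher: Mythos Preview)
The paper does not actually prove this proposition: it is listed in \S\ref{pre} among the ``principal properties'' recalled from \cite{Manselli-Pucci} and \cite[\S 4]{LMV}, and the identity \eqref{MP} is attributed again to \cite{LMV} in the Introduction (see \eqref{infdw/ds}). So there is no in-paper argument to match against; one can only check your proof on its own merits and compare it with the route the cited references take.

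Your argument is sound. The inclusion $x'(s)\in -T_{K(s)}(x(s))=N^*(x(s))$ via left difference quotients and closedness of the tangent cone is the standard one, and combining it with \eqref{NpsubsetTp} gives \eqref{inNco(ga(t))}. For \eqref{MP}, the direct differentiation of $s\mapsto h_{K(s)}(\theta)=\max_{0\le\sigma\le s}\langle\theta,x(\sigma)\rangle$ under the integral is correct: the domination by the Lipschitz constant of $x$ is valid, and your two pointwise cases cover $S^{n-1}$ up to a $\sigma$-null set. In fact your computation for $\theta\in\widehat{N(x(s))}$ already works on the whole sector (not only its relative interior), since $h_{K(s)}(\theta)=\langle\theta,x(s)\rangle$ there and $\langle\theta,x'(s)\rangle\ge 0$ by the first part; the separate treatment of $\partial\widehat{N(x(s))}$ is not strictly needed, though harmless.

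The alternative you sketch at the end --- replacing $K(s')$ by the cap body $K(s)^{x(s')}$ and invoking Proposition~\ref{main1} --- is essentially the approach of \cite{LMV}: one shows that the intermediate arc $\{x(\sigma):s<\sigma<s'\}$ contributes only $o(s'-s)$ to the mean width, so that $w(K(s'))=w\big(K(s)^{x(s')}\big)+o(s'-s)$, and then the gradient formula in Proposition~\ref{main1} gives \eqref{MP}. Your direct support-function computation bypasses this reduction and is somewhat more elementary; the cap-body route, on the other hand, makes the connection with the convex function $p\mapsto w(K^p)$ explicit, which is what the paper exploits later via \eqref{ine}.
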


From Proposition \ref{fondproprsdc} it follows
\begin{theorem}\label{seqpath} Let $\{ \ga_m \}$ be a sequence of
steepest descent curves and $\ga_m$
parameterized as $ x_m( t ),\, t \in [0,1]. $ If the sequence $
\{x_m(t)\} $ converges uniformly to $ x_0(t), \, t \in [0,1], $ then
$x_0(t)$ is a parametrization of a steepest descent curve.
\end{theorem}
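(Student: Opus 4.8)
The plan is to reduce the statement to the purely metric characterization of $\Ga$ given in Proposition \ref{fondproprsdc}, exploiting that the condition appearing there is a closed one and hence stable under limits. Since each $\ga_m$ is a steepest descent curve with parametrization $x_m(t)$, $t\in[0,1]$, Proposition \ref{fondproprsdc} applied to $\ga_m$ gives, for every $m$,
$$|x_m(t')-x_m(t'')|\le |x_m(t')-x_m(t''')|,\qquad 0\le t'\le t''\le t'''\le 1.$$
Fixing an admissible triple $t'\le t''\le t'''$, the uniform convergence $x_m\to x_0$ forces pointwise convergence at these three parameters, and by continuity of the Euclidean norm both sides of the displayed inequality converge to $|x_0(t')-x_0(t'')|$ and $|x_0(t')-x_0(t''')|$ respectively. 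Because the inequality is non-strict it survives the passage to the limit, so $x_0$ satisfies the same chord-monotonicity inequality for all admissible triples.

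To invoke the converse implication in Proposition \ref{fondproprsdc} and conclude $x_0\in\Ga$, I would verify that $x_0$ is a bona fide parametrization of a rectifiable curve. Continuity is immediate, being a uniform limit of the continuous maps $x_m$. For rectifiability, note that uniform convergence makes $\{x_m\}$ uniformly bounded, so the convex hulls $co(\ga_m)$ are contained in a fixed ball and their mean widths $w(co(\ga_m))$ are bounded independently of $m$; Proposition \ref{selfsecrettifiable} then yields $||\ga_m||\le c^{(1)}_n\,w(co(\ga_m))$, whence the lengths $||\ga_m||$ are uniformly bounded. Since length is lower semicontinuous under uniform convergence, $x_0$ has finite length and is therefore rectifiable.

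Putting these together, $x_0$ is a continuous rectifiable parametrization obeying the chord-monotonicity inequality, so Proposition \ref{fondproprsdc} places it in $\Ga$, i.e. $x_0$ parametrizes a steepest descent curve, as claimed. The one point I would treat with care --- the genuine obstacle --- is the rectifiability of the limit: the metric inequality itself transfers to the limit for free precisely because it is closed, but one must separately exclude a limiting curve of infinite length, and it is the uniform length bound furnished by Proposition \ref{selfsecrettifiable} that rules this out.
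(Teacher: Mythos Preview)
Your proof is correct and matches the paper's approach: the paper simply records that Theorem \ref{seqpath} follows from Proposition \ref{fondproprsdc}, and you have written out exactly that deduction by passing the closed chord-monotonicity inequality to the uniform limit. Your extra care about rectifiability of the limit is sound but, strictly speaking, already subsumed in the ``iff'' of Proposition \ref{fondproprsdc} as stated.
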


\section{A variational problem on  sectors}\label{varprobcones}
Formula \eqref{MP} and the results presented in \S
\ref{varprobconesfin} of the present paper, suggest to study some
special functionals on  sectors (i.e. closed, convex subsets) of the
sphere $S^{n-1}$ which are investigated here.

Let $S$ be a  sector  in $S^{n-1}$, $n\geq 2$ and $u$ a unit vector
in $S$. Let us define
$$\Phi(S,u)=\frac{2}{\omega_n}\int_{ S}\langle  \theta , u \rangle^+ \,\,
 d\sigma(\theta),$$
where $f^+=\max\{f,0\}$ is the positive part of $f$. The {\em dual
body} $S^*$ of a convex sector $S$ is
$$S^*=\{y\in S^{n-1} : \langle y,x \rangle \ge 0 \quad \forall x \in S\}.$$
$S$ is a self dual sector when $S^*=S$.

For our scopes the functional $\Phi $ will be restricted to a
special class of sectors: the class $\mathcal{C}$ of  sectors $S$
such that $S\supseteq S^*$. The following variational problem:
\begin{equation}\label{probminPHI}
 \inf \{\Phi(S,u), u\in S^*, S\in \mathcal{C}\}
\end{equation}
will be considered.

Since $u\in S^*$, the function $\langle  \theta , u \rangle$ is non
negative in $S$. Our result is
\begin{theorem}\label{minfunoncones}  The functional
\begin{equation}\label{defPHI}
 \Phi(S,u)=\frac{2}{\omega_n}\int_{ S}\langle  \theta ,
 u \rangle \,\, d\sigma(\theta), \quad u\in S^*
\end{equation}
has  a positive minimum   on the class $\mathcal{C}$. The minimum is
attained at a self-dual sector $S $  and at a vector $v$ such that
$v\in \pa S $.
\end{theorem}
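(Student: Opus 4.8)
The plan is to first secure a minimizer by compactness, and then run two reductions --- one using linearity in $u$, one using monotonicity in $S$ --- that pin down its shape. For existence I would topologize the feasible set $\{(S,u):S\in\mathcal C,\ u\in S^*\}$ with the Hausdorff metric on sectors together with the Euclidean metric on $u\in S^{n-1}$. Since all competitors lie in a fixed hemisphere, the Blaschke selection theorem gives Hausdorff-convergent subsequences; I would check that the duality map $S\mapsto S^*$ is Hausdorff-continuous (through support functions, using $\dist(A,B)=\max_\theta|h_A(\theta)-h_B(\theta)|$), so that both $S\supseteq S^*$ and $u\in S^*$ survive limits and the feasible set is compact. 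As $\Phi$ is continuous in $(S,u)$, a minimum is attained. Positivity follows because $S\supseteq S^*$ forbids $S$ from being $\sigma$-null (a lower-dimensional $S$ has a dual too large to sit inside it), so the minimizer $S_0$ has positive measure; since $u_0\in S_0^*\subseteq S_0$, the integrand $\langle\theta,u_0\rangle$ is near $1$ on a relatively open, positive-measure part of $S_0$ about $u_0$, forcing $\Phi(S_0,u_0)>0$.

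Next I would use that, for fixed $S$, the functional is linear in $u$: setting $b=\frac{2}{\omega_n}\int_S\theta\,d\sigma$ gives $\Phi(S,u)=\langle b,u\rangle$, with $b\neq0$ since $S$ has positive measure in an open hemisphere. Minimizing this linear functional over the spherical convex set $S^*$, the minimum cannot sit at an interior critical point: the only unconstrained minimizer on the sphere is $-b/|b|$, and it fails to lie in $S^*$, because every $u\in S^*$ satisfies $\langle\theta,u\rangle\ge0$ on $S$, hence $\langle b,u\rangle\ge0>\langle b,-b/|b|\rangle$. Thus the optimal direction lies on $\partial S^*$, and applied at the minimizing sector this gives a minimizing vector $v\in\partial S_0^*$; once self-duality is established, $\partial S_0^*=\partial S_0$.

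The shape of $S_0$ I would extract from monotonicity plus a sandwiching lemma. Since $u\in S^*$ makes $\langle\theta,u\rangle\ge0$ throughout $S$, shrinking $S$ can only decrease $\Phi$: if $T\subseteq S$ then $\Phi(T,u)\le\Phi(S,u)$. The crucial point is that every feasible $S$ can be shrunk to a self-dual sector without losing feasibility, which I would phrase as a \emph{Key Lemma}: if $S\supseteq S^*$, there is a self-dual sector $T$ with $S^*\subseteq T\subseteq S$. Granting it, $u\in S^*\subseteq T=T^*$ makes $(T,u)$ feasible with $T\in\mathcal C$, and $\Phi(T,u)\le\Phi(S,u)$; hence the infimum over $\mathcal C$ equals the infimum over self-dual sectors and is attained at such a $T$. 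Combined with the previous paragraph this produces a minimizing couple $(S_0,v)$ with $S_0=S_0^*$ and $v\in\partial S_0$.

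I expect the Key Lemma to be the main obstacle. The map $T\mapsto T^*$ is an order-reversing involution that carries the interval $\{T:S^*\subseteq T\subseteq S\}$ into itself (from $T\subseteq S$ one gets $T^*\supseteq S^*$, from $T\supseteq S^*$ one gets $T^*\subseteq S$), so a self-dual $T$ is exactly a fixed point. I would seek it either by an explicit concentric construction --- which already settles $n=2$, where $S$ and $S^*$ are concentric arcs and $T$ is the intermediate arc of length $\pi/2$ --- or by a continuity argument along a deformation from $S^*$ to $S$, using the equivalence between self-dual sectors and sets of constant width $\pi/2$; keeping $S\supseteq S^*$ active throughout the deformation is where the real difficulty lies. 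An alternative local route fixes the minimizer and, assuming $S_0\supsetneq S_0^*$, cuts off a small cap of $S_0$ around a boundary point $\theta^*\in\partial S_0\setminus S_0^*$ with $\langle\theta^*,v\rangle>0$: this strictly lowers $\Phi$, while the slack $\theta^*\notin S_0^*$ should keep $(S')^*\subseteq S'$ for small caps, contradicting minimality. Verifying that feasibility survives the cut is the delicate step in this variant.
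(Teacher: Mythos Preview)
Your approach is essentially the paper's: compactness for existence, linearity in $u$ to push the minimizing direction to $\partial S^*$, and monotonicity in $S$ combined with your Key Lemma to reduce to self-dual sectors. The Key Lemma you flag as the main obstacle is in fact a known result of Barker and Foran \cite{Barker}, which the paper simply cites rather than reproves; for positivity the paper quotes an explicit quantitative lower bound from Santal\`o via \cite{Manselli-Pucci}, but your direct argument (full-dimensionality of $S$ forced by $S\supseteq S^*$, hence positive measure near $u_0$) is also valid.
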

\begin{proof} The class $\mathcal{C}$ is obviously compact with respect
to the Hausdorff distance and  $\Phi$ is continuous with respect to
this metric. Moreover the subset of the self dual sectors is a compact set too. In \cite{Barker} it is proved that, if $C$ is a convex
cone containing its dual, then there exists a self-dual cone $K$
contained in $C$. Similar results hold for sectors. Therefore if $ S
\in \mathcal{C},$ then there exists a  self dual sector $ S_0\subseteq S,
$ $S_0 = S_0^*\supseteq S^*;$ moreover, if $u\in S^*,$ then $u\in
S_0^*$ and
$$\Phi(S,u) \geq  \Phi(S_0,u) \geq \min_{S=S^*,\, v\in S^*} \Phi(S,v).$$
To conclude the proof let us observe that: i) $\Phi$ depends
linearly on $u$; ii) its gradient with respect to $u$ does not
vanish in $S$. Therefore if $\Phi$  has minimum at $(S,v)$, then
$v\in \pa S.$ To conclude that this minimum is positive,
\cite[Theorem VI]{Manselli-Pucci} (coming  from Theorem 1 of
\cite{Santalo'}) can be used. In our notations,  if $S$ is a
sector in $S^{n-1}$, $n\geq 2$, then
$$\frac{2}{\omega_n}\int_{ S} \langle \theta , u \rangle \,\,
 d\sigma(\theta)\geq \frac{\om_{n-1}}{\om_n(n-1)}(\dfrac{1}{\sqrt{n}})^n$$
holds.
\end{proof}
\begin{rem}
 In \cite{Bezdek}, \cite{Dekster} it has been noticed that the self dual
 sectors are sectors with constant width $\pi/2$ and conversely.
\end{rem}

\begin{rem} \label{rem3.3} If $ n=2 ,\,  \mathcal{C} $ is the family of arcs $\; S =
\Big \{ \theta = (\cos \vartheta , \sin \vartheta  \big) , \; \vartheta_1 \leq \vartheta \leq \vartheta_2 \Big \} $
of the unit circle in $ \RR^2 $ with $\vartheta_2-\vartheta_1 \in
[\pi/2, \pi] $ and $ u = (\cos \zeta , \sin \zeta
 \big) $ is a vector with end point in $ S. $ Moreover
$$
\Phi(S,u)= \frac{1}{\pi} \int_{\vartheta_1}^{\vartheta_2}
\cos(\zeta - \vartheta ) d\vartheta.
$$
The minimum of $ \Phi $ is trivial: the minimizing $ S_0 $
 is an arc of  length $ \pi/2 $ and $ u_0 $ one of
 unit vectors joining the origin with one of the end points of the
 arc. Thus
 $$
\min \{\Phi(S,u), u\in S^*, S\in \mathcal{C}\}= \Phi(S_0,u_0 )=
 \frac{1}{\pi} \int_{0}^{\pi/2} \cos(\vartheta ) d\vartheta = \frac{1}{\pi}.
 $$
\end{rem}
\section{The minimum of the functional $\Phi$ for  n=3}\label{constwidthonS}
First let us introduce some notations and definitions.

Let $S^+=\{x_1^2+x_2^2+x_3^2=1, x_1 \geq 0\}$ be  the upper hemisphere in $\R^3$ centered at O=(0,0,0).
Let $p\in S^+, q \in S^+$, $p\neq -q$.
Let us denotes $\widehat{\overline{pq}}$ the shorter arc of the great circle of $S^+$ with endpoints $p$ and $q$.

$S^+, $ with the distance
$$|\widehat{\overline{pq}}|:=\arccos \langle p, q \rangle \mbox{\quad for\, } p\neq -q ,
\quad  |\widehat{\overline{pq}}|:=\pi \mbox{\quad for\, } p=-q,$$
is a metric space. For every $p\in S^+$, let
$$[p]:=\{x\in \R^3: x=\la p, \la \geq 0\}$$
 the  half line from O through $p$.
$\mathcal{K}$ will be called {\em convex in } $S^+$  if and only if
it is a sector. Let us notice that if $\mathcal{K}$ is convex in
$S^+$ and $p \in \mathcal{K}, q \in \mathcal{K}$ then
$\widehat{\overline{pq}} \subset \mathcal{K}$.


A sector $\mathcal{K}$ will be called of constant width $w\, (0< w
\leq \pi/2)$ if for every $\theta_0\in \pa \mathcal{K}$ there exists
$\theta_0'\in \pa \mathcal{K}$ satisfying
$|\widehat{\overline{\theta_0\theta_0'}}|= w$, and
$|\widehat{\overline{\theta_0\theta_1}}|\leq  w$ for all $\theta_1
\in \mathcal{K}.$ Examples of sectors $\mathcal{K}$ on  $S^+$ of
constant width  are obtained by intersection of $S^+$ with any
circular cone $K$ contained  in $x_1 \geq 0$. The width of
$\mathcal{K}$ is twice of the opening of $K$.

The intersection of {\em the positive orthant}  $\{x=(x_1,x_2,x_3): x_i \geq 0 , i=1,2,3\}$
 with $S^+$ is a convex set of constant
width $\pi/2$. Let us  call {\em orthant } any set contained in
$x_1\geq 0 $ which is a rotation of the positive orthant. Let us
call {\em $S^+$orthant} the  intersection of $S^+$ with an orthant; in \cite{Bezdek} an $S^+ $ orthant is also called a
Reuleaux triangle on the sphere.

The Blaschke-Lebesgue Theorem on $S^2$ (\cite{Leichtweiss}) is needed to prove the following
\begin{theorem}\label{minorthant} Let $\mathcal{K}$ a convex set in $S^+$ of constant width  $\pi/2$, and $u\in \pa\mathcal{K}.$
Let $\mathcal{O}$ an  $S^+$orthant  having $u$ as an extreme point. Then
\begin{equation}\label{intminorthant}
\int_{\mathcal{K}}\langle u, \theta \rangle d \sigma(\theta) \geq \int_{\mathcal{O}}\langle u, \theta \rangle d \sigma(\theta
)=\frac{\pi}{4}.
\end{equation}
The equality holds if and only if $K$ is an $S^+$orthant.
\end{theorem}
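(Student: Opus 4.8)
The plan is to pass to geodesic polar coordinates centred at $u$, reduce the weighted integral to the spherical area, and then let the Blaschke--Lebesgue theorem on $S^2$ finish the job. First I would extract the two consequences of the constant-width hypothesis that I need. Since a sector of constant width $\pi/2$ is self-dual, $u\in\mathcal{K}=\mathcal{K}^*$ forces $\langle u,\theta\rangle\ge 0$ for every $\theta\in\mathcal{K}$, so $\mathcal{K}$ is contained in the closed cap $\{\theta:|\widehat{\overline{u\theta}}|\le\pi/2\}$; and spherical convexity makes $\mathcal{K}$ geodesically star-shaped with respect to $u$. Hence $\mathcal{K}$ has a well-defined radial function $r(\psi)$ in polar coordinates about $u$, where $\psi$ is the azimuthal angle and $\phi$ the geodesic distance from $u$, and the two facts above give $0\le r(\psi)\le\pi/2$ for all $\psi$.

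Next I would record both relevant quantities as integrals in $\psi$. Using $d\sigma=\sin\phi\,d\phi\,d\psi$ and $\langle u,\theta\rangle=\cos\phi$, integration of $\phi$ from $0$ to $r(\psi)$ yields
\begin{equation*}
\int_{\mathcal{K}}\langle u,\theta\rangle\,d\sigma=\frac12\int \sin^2 r(\psi)\,d\psi,\qquad \sigma(\mathcal{K})=\int\bigl(1-\cos r(\psi)\bigr)\,d\psi .
\end{equation*}
The key elementary remark is that $\sin^2 r=(1-\cos r)(1+\cos r)\ge 1-\cos r$ whenever $\cos r\ge 0$, that is, exactly on the range $r\in[0,\pi/2]$ secured above. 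Integrating this pointwise inequality gives the reduction
\begin{equation*}
\int_{\mathcal{K}}\langle u,\theta\rangle\,d\sigma\ \ge\ \frac12\,\sigma(\mathcal{K}).
\end{equation*}

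Finally I would invoke the Blaschke--Lebesgue theorem on $S^2$ (\cite{Leichtweiss}): among sectors of constant width $\pi/2$ the area is minimised by the $S^+$orthant, whose area is $\tfrac18\cdot 4\pi=\pi/2$. Combining the two estimates gives $\int_{\mathcal{K}}\langle u,\theta\rangle\,d\sigma\ge\frac12\cdot\frac\pi2=\frac\pi4$. For the orthant $\mathcal{O}$ with vertex $u$, its edge opposite $u$ lies on the great circle $u^\perp$, so $r\equiv\pi/2$ and the formula above gives $\int_{\mathcal{O}}\langle u,\theta\rangle\,d\sigma=\frac12\int_0^{\pi/2}1\,d\psi=\pi/4$, confirming the stated value. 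For the equality case I would argue by squeezing: if the integral equals $\pi/4$ then $\frac12\sigma(\mathcal{K})\le\pi/4$, hence $\sigma(\mathcal{K})=\pi/2$, the minimal area, so the equality characterisation in the spherical Blaschke--Lebesgue theorem identifies $\mathcal{K}$ as an $S^+$orthant; the converse is the direct computation just given.

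The polar-coordinate bookkeeping and the scalar inequality $\sin^2 r\ge 1-\cos r$ are routine. The two points demanding care, where the real content lies, are (i) justifying the bound $r(\psi)\le\pi/2$ cleanly from the constant-width/self-dual hypothesis, and (ii) the appeal to the spherical Blaschke--Lebesgue theorem together with its equality characterisation, which is the one genuinely external ingredient and the crux of the lower bound $\sigma(\mathcal{K})\ge\pi/2$.
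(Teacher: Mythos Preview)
Your argument is correct and, in fact, cleaner than the paper's. Both proofs rest on the spherical Blaschke--Lebesgue theorem, but the reduction to it is quite different. The paper builds a comparison $S^+$orthant $\tilde{\mathcal{O}}$ through $u$ by a continuity argument, splits $\mathcal{K}$ and $\tilde{\mathcal{O}}$ along a spherical cap $\mathfrak{C}$ centred at $u$, and uses the monotonicity of $\cos|\widehat{\overline{\theta u}}|$ across $\partial\mathfrak{C}$ to bound the integral below by $\int_{\tilde{\mathcal{O}}}\langle u,\theta\rangle\,d\sigma + \cos|\widehat{\overline{\tau_1 u}}|\bigl(\sigma(\mathcal{K})-\sigma(\tilde{\mathcal{O}})\bigr)$; Blaschke--Lebesgue then makes the last bracket nonnegative. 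Your route bypasses the geometric construction entirely: the polar-coordinate identity $\int_{\mathcal{K}}\langle u,\theta\rangle\,d\sigma=\tfrac12\int\sin^2 r(\psi)\,d\psi$ together with the pointwise bound $\sin^2 r\ge 1-\cos r$ on $[0,\pi/2]$ gives the one-line reduction $\int_{\mathcal{K}}\langle u,\theta\rangle\,d\sigma\ge\tfrac12\,\sigma(\mathcal{K})$, after which Blaschke--Lebesgue finishes. What your approach buys is simplicity and transparency; what the paper's approach buys is a slightly more explicit comparison with a concrete orthant, which fits its narrative of comparing $\mathcal{K}$ with $\mathcal{O}$ directly.

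One refinement worth recording: your equality analysis actually yields more than you state. Equality forces both $\sigma(\mathcal{K})=\pi/2$ (hence $\mathcal{K}$ is an orthant) \emph{and} $r(\psi)\in\{0,\pi/2\}$ almost everywhere, since $\sin^2 r=1-\cos r$ only at those endpoints. The second condition means the boundary of $\mathcal{K}$ opposite $u$ lies on the great circle $u^\perp$, which for an orthant happens precisely when $u$ is a vertex. So your method in fact pins down the equality case as ``$\mathcal{K}$ an $S^+$orthant with $u$ one of its extreme points,'' matching the sharpened statement the paper uses in Theorem~\ref{n=3equality}. Conversely, your ``if'' direction as written only verifies equality when $u$ is a vertex, which is the correct scope; an orthant with $u$ in the interior of an edge would give strict inequality.
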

 From now on it will be assumed that a convex subset of $S^+$ will have
the boundary counterclockwise oriented.
\begin{proof}Let us
assume that $u=(1,0,0)$ and that $\mathcal{K}$ is not an orthant. Let
$U=\{u'\in \pa \mathcal{K}: |\widehat{\overline{uu'}}|=\pi/2\}$. Then
$U$ is either a point $u'$ or a closed arc of the equator $\{x_1,x_2,x_3): x_1=0, x_2^2+x_3^2=1\}$
of the form $\widehat{\overline{u'_1u'_2}}$, with  $|\widehat{\overline{u'_1u'2}}|< \pi/2$.

There are infinite $S^+$ orthants $\mathcal{O}$ containing both $u$ and
$U$. Each of them is bounded by three arcs of great circles,
$\widehat{\overline{uu_1}}, \widehat{\overline{u_1u_2}} \supset U,
\widehat{\overline{u_2u}}$. Let us call $\mathcal{O}(u_1,u,u_2)$
such $S^+$ orthant.  If $U \subset \widehat{\overline{u_1u_2}}$ (extremes
excluded) then $u_1 \not \in \mathcal{K}$ and
$\widehat{\overline{uu_1}} $ will be decomposed in two arcs
$\widehat{\overline{u\theta_1}}\subset \mathcal{K}$ and
$\widehat{\overline{\theta_1u_1}}\cap \mathcal{K}=\{\theta_1\} $;
moreover $u_2 \not \in \mathcal{K}$ and $\widehat{\overline{u_2u}}$
will be decomposed in two arcs
$\widehat{\overline{\theta_2u}}\subset \mathcal{K}$ and
$\widehat{\overline{u_2\theta_2}} \cap
\mathcal{K}=\{\theta_2\}$ (see Figure 1). If $u_1=u_1'$   then
$\widehat{\overline{u_1u}}\subset \mathcal{K} $. If $u'_2=u_2$  then
$\widehat{\overline{u_2u}}\subset \mathcal{K} $. If $u_1$ moves
continuously towards $U$, $\theta_1$ moves continuously toward $U$
too; similarly behaviour for $u_2$. Therefore there exists a choice
of $u_1,u_2$ so that the corresponding $\theta_1,\theta_2$ (called
$\tau_1,\tau_2$  from now on) satisfy
\begin{figure}[htb]
\epsfig{file=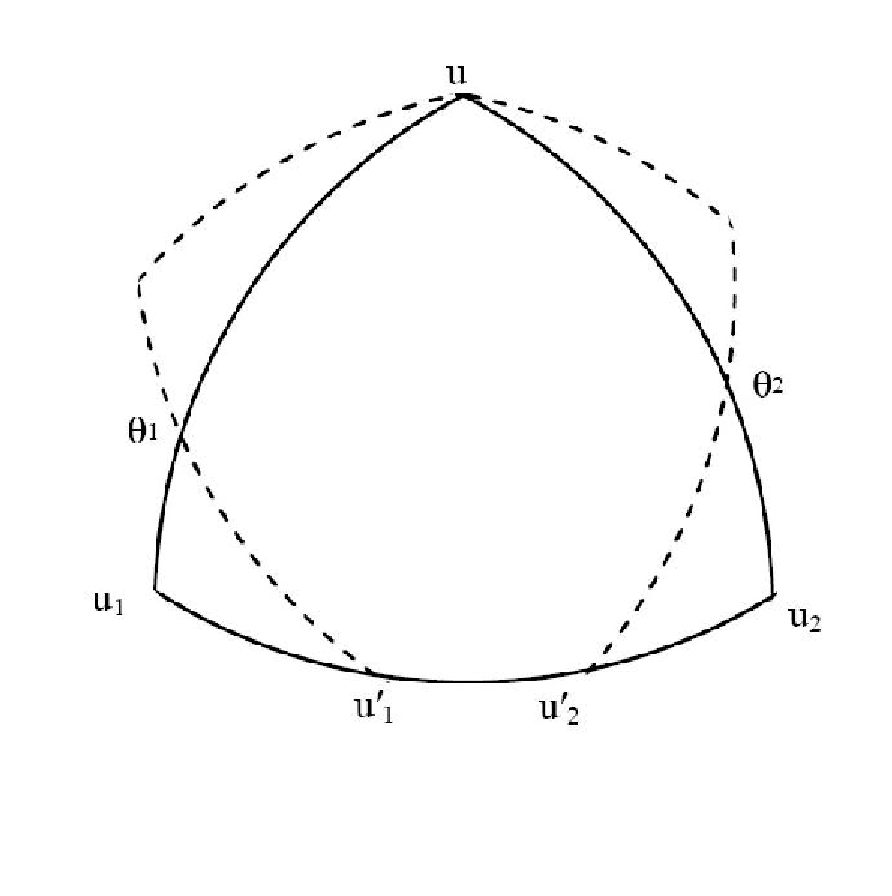, width=10cm} \caption{ Sets of constant width  $\pi/2 $.}
\end{figure}

\begin{equation}\label{uta1=utau2}
|\widehat{\overline{u\tau_1}}|=|\widehat{\overline{u\tau_2}}|.
\end{equation}

Let us consider $\mathcal{K}$ and the $S^+$ orthant $ \mathcal{\tilde{O}}=\mathcal{O}(u,u_1,u_2)$ satisfying \eqref{uta1=utau2}.
Let us consider also
the spherical sector $\mathfrak{C}=\{\theta \in S^+: |\widehat{\overline{\theta u}}|\leq |\widehat{\overline{\tau_1 u}}|\}.$
Notice that $\mathcal{K}\cap \mathfrak{C} $, $\mathcal{\tilde{O}}\cap \mathfrak{C}$ are  sectors and
$\pa(\mathcal{\tilde{O}}\cap \mathfrak{C})\subset \pa(\mathcal{K}\cap \mathfrak{C})$, thus
$$\mathcal{K}\cap \mathfrak{C} \supseteq \mathcal{\tilde{O}}\cap \mathfrak{C}, \quad \mathcal{K}\setminus \mathfrak{C}
\subseteq \mathcal{\tilde{O}}\setminus \mathfrak{C}.$$
Let  us compute:
$$J:=\int_{\mathcal{K}}\langle u, \theta \rangle d \sigma(\theta)=\int_{\mathcal{K}\cap\mathfrak{C}}\cos |\widehat{\overline{\theta u}}|
d \sigma(\theta)+\int_{\mathcal{K}\setminus\mathfrak{C}}\cos |\widehat{\overline{\theta u}}| d \sigma(\theta)=$$
$$=\int_{\mathcal{\tilde{O}}\cap\mathfrak{C}}\cos |\widehat{\overline{\theta u}}| d \sigma(\theta)+
\int_{(\mathcal{K}\setminus\mathcal{\tilde{O}})\cap\mathfrak{C}}\cos |\widehat{\overline{\theta u}}|
d \sigma(\theta)+\int_{\mathcal{\tilde{O}}\setminus\mathfrak{C}}\cos |\widehat{\overline{\theta u}}|
d \sigma(\theta)-\int_{(\mathcal{\tilde{O}}\setminus\mathcal{K})\setminus \mathfrak{C}}\cos |\widehat{\overline{\theta u}}|
d \sigma(\theta)=$$
(adding first and third integral)
$$=\int_{\mathcal{\tilde{O}}}\langle u, \theta \rangle d \sigma(\theta)+\int_{(\mathcal{K}\setminus\mathcal{\tilde{O}})
\cap\mathfrak{C}}\cos |\widehat{\overline{\theta u}}| d \sigma(\theta)-\int_{(\mathcal{\tilde{O}}\setminus\mathcal{K})\setminus
\mathfrak{C}}\cos |\widehat{\overline{\theta u}}| d \sigma(\theta).$$
Using the fact that
$$\left.\cos |\widehat{\overline{\theta u}}| \right|_{\mathfrak{C}}\geq \cos |\widehat{\overline{\tau_1 u}}| \geq
\left.\cos |\widehat{\overline{\theta u}}|\right|_{(\mathcal{\tilde{O}}\setminus\mathcal{K})\setminus \mathfrak{C}}, $$
we have
\begin{equation}\label{jineq}
J \geq \int_{\mathcal{\tilde{O}}}\langle u, \theta \rangle d \sigma(\theta)+\cos |\widehat{\overline{\tau_1 u}}|\left[
\mis \left( (\mathcal{K}\setminus\mathcal{\tilde{O}})\cap\mathfrak{C}\right)-\mis
\left((\mathcal{\tilde{O}}\setminus\mathcal{K})\setminus \mathfrak{C}\right) \right].
\end{equation}

Since
$$\mis \left( (\mathcal{K}\setminus\mathcal{\tilde{O}})\cap\mathfrak{C}\right)=\mis
(\mathcal{K}\cap\mathfrak{C})-\mis (\mathcal{\tilde{O}}\cap\mathfrak{C})$$
and
$$ \mis (\mathcal{\tilde{O}}\setminus\mathcal{K})\setminus \mathfrak{C}=
 \mis (\mathcal{\tilde{O}}\setminus\mathfrak{C})-\mis (\mathcal{K}\setminus\mathfrak{C}),$$
subtracting the last two equalities we get
$$ \mis \left( (\mathcal{K}\setminus\mathcal{\tilde{O}})\cap\mathfrak{C}\right)-\mis
\left((\mathcal{\tilde{O}}\setminus\mathcal{K})\setminus
\mathfrak{C}\right)=\mis \mathcal{K}- \mis \mathcal{\tilde{O}}.$$
 From the Blaschke-Lebesgue theorem on the sphere \cite{Leichtweiss}, $\mis \mathcal{K}- \mis \mathcal{\tilde{O}}$
is  positive, so the last one  in \eqref{jineq} is
positive too. Since \cite[Theorem 5.3, Remark 5.10]{Leichtweiss}
equality holds in (\ref{intminorthant}) if and only if $K$ is an $S^+$ orthant.\end{proof}

From previous theorem and Theorem \ref{minfunoncones} for  $n=3$ the variational problem  \eqref{probminPHI}
is now solved. 
\begin{theorem}\label{n=3equality} If $\, n = 3, $
 $$
 \min \{\Phi(S,u), u\in S^*, S\in \mathcal{C}\} \geq   \frac{1}{8}.
 $$
 The equality holds if and only if $S$ is a $S^+$ orthant and $u$ is
 one of its extreme points.
\end{theorem}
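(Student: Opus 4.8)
The plan is to obtain Theorem \ref{n=3equality} by combining two results already established: the existence-and-location statement of Theorem \ref{minfunoncones}, which pins the minimizer down to a self-dual sector with its vector on the boundary, and the sharp geometric inequality of Theorem \ref{minorthant}, which compares such a sector with the orthant. Once these are available the rest is essentially bookkeeping, the only arithmetic input being $\omega_3=4\pi$, so that for $n=3$ one has $\Phi(S,u)=\frac{1}{2\pi}\int_S\langle\theta,u\rangle\,d\sigma$. I would open by recording this normalization, since it is what converts the value $\pi/4$ of Theorem \ref{minorthant} into the value $1/8$ claimed here.

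For the lower bound and its attainment I would argue as follows. By Theorem \ref{minfunoncones} the infimum in \eqref{probminPHI} is a minimum, attained at a self-dual sector $S_0$ together with a vector $v\in\pa S_0$. By the Remark following that theorem, $S_0=S_0^*$ is precisely a sector of constant width $\pi/2$, so it is admissible in Theorem \ref{minorthant}; applying that theorem with $\mathcal{K}=S_0$ and $u=v\in\pa S_0$ yields $\int_{S_0}\langle v,\theta\rangle\,d\sigma\ge\pi/4$, whence
$$\min\{\Phi(S,u)\}=\Phi(S_0,v)=\frac{1}{2\pi}\int_{S_0}\langle v,\theta\rangle\,d\sigma\ge\frac{1}{2\pi}\cdot\frac{\pi}{4}=\frac18.$$
To see the value $1/8$ is realized, I would take $S=\mathcal{O}$ to be any $S^+$ orthant and $u$ one of its extreme points: since $\mathcal{O}$ is self-dual we have $\mathcal{O}\in\mathcal{C}$ and $u\in\mathcal{O}^*$, and the equality case of Theorem \ref{minorthant} gives $\Phi(\mathcal{O},u)=\frac{1}{2\pi}\cdot\frac{\pi}{4}=\frac18$. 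This settles both that the minimum equals $1/8$ and the ``if'' direction of the characterization.

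For the ``only if'' direction, suppose $\Phi(S,u)=1/8$ with $S\in\mathcal{C}$ and $u\in S^*$. I would reuse the reduction from the proof of Theorem \ref{minfunoncones}: choose a self-dual sector $S_0\subseteq S$ with $S_0=S_0^*\supseteq S^*$. Then $u\in S^*\subseteq S_0^*$, so $(S_0,u)$ is admissible, and since $\langle\theta,u\rangle\ge0$ throughout $S$ we get $\Phi(S,u)\ge\Phi(S_0,u)\ge1/8$; equality forces $\Phi(S_0,u)=1/8$ and $\int_{S\setminus S_0}\langle\theta,u\rangle\,d\sigma=0$. Because $u$ now minimizes the linear functional $u'\mapsto\Phi(S_0,u')$ over $S_0^*$ and its gradient does not vanish, $u$ must lie on $\pa S_0^*=\pa S_0$; the equality case of Theorem \ref{minorthant} then forces $S_0$ to be an $S^+$ orthant and $u$ one of its extreme points. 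Finally, from $\int_{S\setminus S_0}\langle\theta,u\rangle\,d\sigma=0$ with nonnegative integrand, $\langle\theta,u\rangle=0$ a.e.\ on $S\setminus S_0$; as $\{\theta\in S^+:\langle\theta,u\rangle=0\}$ is a great circle of measure zero, $S\setminus S_0$ is null, and convexity of $S\supseteq S_0$ upgrades this to $S=S_0$. Hence $S$ is an orthant and $u$ an extreme point.

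The genuinely hard content — the Blaschke--Lebesgue comparison with the orthant — is confined to Theorem \ref{minorthant}, which I am invoking as a black box. Within the present assembly the only delicate point I expect is the ``only if'' direction: one must check that passing to the self-dual subsector $S_0$ preserves equality, that the linearity-plus-nonvanishing-gradient argument genuinely pushes $u$ onto $\pa S_0$ so that Theorem \ref{minorthant} becomes applicable, and that the measure-zero argument then lets one recover $S=S_0$ rather than merely $S_0\subseteq S$.
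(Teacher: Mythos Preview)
Your proof is correct and follows exactly the route the paper takes: combine Theorem \ref{minfunoncones} (minimum exists, at a self-dual sector with $u$ on its boundary) with Theorem \ref{minorthant} (the orthant comparison among constant-width-$\pi/2$ sectors) and the normalization $\omega_3=4\pi$. You in fact supply considerably more detail than the paper's one-line proof, in particular the careful ``only if'' argument via the self-dual subsector and the measure-zero step to recover $S=S_0$; the only small point to tighten is that Theorem \ref{minorthant} as stated gives $S_0$ an orthant but not directly that $u$ is a vertex, so you should make explicit that once $S_0$ is an orthant the linear functional $u'\mapsto\Phi(S_0,u')$ attains its minimum over $S_0$ only at the three extreme points (the gradient $\int_{S_0}\theta\,d\sigma$ is, by symmetry, a positive multiple of the center direction).
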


\section{Variational problems for steepest descent curves and related
 problems}\label{varprobconesfin}
 In this paragraph, using the results of previous ones, a priori lower  bounds
 for the derivative of the mean width of the convex envelope of steepest descent curves are studied; an existence theorem for
 related functionals is also proved.
\begin{theorem}\label{minderws}Let  $\ga \subset \rn , n\geq 2, \ga\in \Ga$.
The derivative of the mean width of the convex envelope of $  \ga $
is
 uniformly bounded from below; more precisely
\begin{equation}\label{n=3inequality}
 \dfrac{d}{ds}w_{\ga}(s)\geq \min \{\Phi(S,u), u\in  S^*, S\in \mathcal{C}\}.
\end{equation}

\end{theorem}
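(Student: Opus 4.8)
The plan is to identify the derivative $\frac{d}{ds}w_{\ga}(s)$ with a value of the functional $\Phi$ at an admissible pair, and then to invoke the minimum established in Theorem \ref{minfunoncones}. By Proposition \ref{corIBDpath}, for almost every $s\in(0,||\ga||)$ one has, with $N(x(s))=N_{co(\ga(s))}(x(s))$ and the arc-length parametrization giving $|x'(s)|=1$,
\begin{equation*}
\frac{d}{ds}w_{\ga}(s)=\frac{2}{\omega_n}\int_{\widehat{N(x(s))}}\langle\theta,x'(s)\rangle\,d\sigma(\theta)=\Phi(\widehat{N(x(s))},x'(s)).
\end{equation*}
Hence it suffices to show that for almost every such $s$ the pair $(\widehat{N(x(s))},x'(s))$ is admissible for \eqref{probminPHI}, that is $\widehat{N(x(s))}\in\mathcal{C}$ and $x'(s)\in(\widehat{N(x(s))})^*$; the desired inequality \eqref{n=3inequality} then follows at once by passing to the minimum.

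First I would check the membership in $\mathcal{C}$. Writing $N=N(x(s))$, relation \eqref{NpsubsetTp} of Proposition \ref{proprsdc} gives the cone inclusion $N\supseteq N^*$. Since a vector pairs nonnegatively with every element of the cone $N$ precisely when it does so with the generators $\widehat{N}$, the dual sector $(\widehat{N})^*$ coincides with $N^*\cap S^{n-1}$; intersecting $N\supseteq N^*$ with $S^{n-1}$ therefore yields $\widehat{N}\supseteq(\widehat{N})^*$, which is the defining property of the class $\mathcal{C}$. It remains to see that $\widehat{N}$ is a sector of a hemisphere: this holds as soon as $(\widehat{N})^*\neq\emptyset$, because a unit vector $u\in(\widehat{N})^*$ confines $\widehat{N}$ to the closed hemisphere $\{\theta\in S^{n-1}:\langle u,\theta\rangle\geq0\}$, and such a $u$ is supplied by the next step.

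Next, from \eqref{inNco(ga(t))} we have $x'(s)\in N^*$, and since $|x'(s)|=1$ this gives $x'(s)\in N^*\cap S^{n-1}=(\widehat{N})^*$; in particular $(\widehat{N})^*\neq\emptyset$, which completes the previous point. Because $x'(s)\in(\widehat{N})^*$, the integrand $\langle\theta,x'(s)\rangle$ is nonnegative on $\widehat{N}$, so the two expressions for $\Phi$ (with and without the positive part) agree on this pair. Thus $(\widehat{N(x(s))},x'(s))$ is admissible and
\begin{equation*}
\frac{d}{ds}w_{\ga}(s)=\Phi(\widehat{N(x(s))},x'(s))\geq\min\{\Phi(S,u):u\in S^*,\,S\in\mathcal{C}\},
\end{equation*}
which is \eqref{n=3inequality}. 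I do not expect a deep obstacle here, since the statement is essentially a corollary of the pointwise formula of Proposition \ref{corIBDpath} together with the existence of the minimum in Theorem \ref{minfunoncones}; the only point requiring care is the faithful translation of the cone-level inclusions of Propositions \ref{proprsdc} and \ref{corIBDpath} into the sector-level conditions $\widehat{N}\in\mathcal{C}$ and $x'(s)\in(\widehat{N})^*$, and in particular the remark that $x'(s)$ being a nonzero element of $N^*$ forces $N$ into a closed half-space, so that $\widehat{N}$ is genuinely a convex subset of a hemisphere and the minimum of Theorem \ref{minfunoncones} applies.
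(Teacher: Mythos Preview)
Your argument is correct and follows essentially the same route as the paper's proof, which simply cites \eqref{MP}, \eqref{NpsubsetTp}, and Theorem~\ref{minfunoncones}; you have merely unpacked in detail why $\widehat{N(x(s))}\in\mathcal{C}$ and $x'(s)\in(\widehat{N(x(s))})^*$, points the paper leaves implicit.
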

\begin{proof} Since $x'(s) \in -T_{co(\ga(s))}(x(s))$,
from equality \eqref{MP}, inclusion \eqref{NpsubsetTp} and
 Theorem \ref{minfunoncones}, the inequality  \eqref{n=3inequality}
follows.
\end{proof}

 In \cite{Manselli-Pucci} the following example was noticed.

 \begin{ex}\label{ex1}  Let $ \om \in \RR $ be the unique solution to the
equation $ \om = e^{-\frac{3 \pi}{2} \om} \; \sim \;0.2744, \; \mu >
0, $ $ 0~< s \leq  \mu; $ let us define
\begin{equation}\label{eta}
\eta(s):=-\frac{1}{\om} \log \big(\frac{ \om \, s }{\sqrt{1 +
\om^2}}\big);
\end{equation}
let $ \ga^{(\mu)} $ be the logarithmic
spiral in $ \RR^2, $ with parametric equations:
\begin{equation}
x_1(s) =\frac{\om \, s}{\sqrt{1 + \om^2}} \cos \eta(s) ,\quad
x_2(s) =\frac{\om \, s}{\sqrt{1 + \om^2}} \sin \eta(s).
\end{equation}
It can be shown that  $ \ga^{(\mu)} $ is a steepest descent curve
and $ s $ is its curvilinear abscissa; the convex body $
co(\ga^{(\mu)}(\mu)) $ has perimeter  $ \mu = \pi \; w(
co(\ga^{(\mu)}(\mu)). $
\end{ex}

\begin{rem} For $n=2$ the minimum of the functional in Theorem \ref{minfunoncones} is $1/\pi$ and there is
a steepest descent curve  $\ga^{\star}$ such that $\dfrac{d}{ds}w_{\ga^{\star}}(s)=1/\pi$ at each point.
\end{rem}
\begin{proof} It is an easy consequence of Remark \ref{rem3.3}.
 The remaining part is a consequence of Example \ref{ex1}, for every  $\mu >0$.
\end{proof}

Let us point out that, for $n=3,$  from Theorem \ref{n=3equality}
the value of
 the  right hand side in \eqref{n=3inequality}
is $\frac{1}{8}$ and it is obtained  only when $S$ is an  orthant, but
the equality case in \eqref{n=3inequality} is not attained  in three
dimensions as the following theorem shows.

\begin{theorem}\label{bad} Let $S_0 $ be an orthant of $ \RR^3 $ intersected with a semisphere,
$ u_0 $ one of its extreme points, $ \ga \in \Ga_W, s \in [0, ||\ga||] .$
The inequality
$$
\inf_{\ga \in \Ga_W} \frac{d w(co(\ga(s)))}{ds} > \Phi(S_0,u_0) =
\min \{\Phi(S,u), u\in  S^*, S\in \mathcal{C}\}
$$
holds  a.e. in $ [0, ||\ga||] .$
\end{theorem}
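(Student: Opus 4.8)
The plan is to argue by contradiction, showing that the equality $\frac{d}{ds}w_\ga(s)=\Phi(S_0,u_0)=\frac18$ can occur only on a Lebesgue-null set of parameters. First I would recall the baseline estimate: by \eqref{NpsubsetTp} the normal sector $\widehat{N(x(s))}$ lies in the class $\mathcal{C}$, so \eqref{MP} together with Theorem \ref{minderws} and the sharp three-dimensional bound of Theorem \ref{n=3equality} give $\frac{d}{ds}w_\ga(s)=\Phi(\widehat{N(x(s))},x'(s))\ge \frac18$ for a.e. $s$. Suppose the set $E\subset[0,||\ga||]$ on which equality holds had positive measure. By the equality clause of Theorem \ref{n=3equality} (resting on the rigidity part of Theorem \ref{minorthant}), for a.e. $s\in E$ the sector $\widehat{N(x(s))}$ is then an $S^+$ orthant, and the tangent $x'(s)$ is one of its three vertices; equivalently $N(x(s))$ is the right-angled self-dual cone and $x'(s)$ is one of its extreme rays.

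Next I would reformulate this through the support cone. Self-duality of $N(x(s))$ turns \eqref{NpsubsetTp} into the equality $N(x(s))=N^*(x(s))=-T_{\mathrm{co}(\ga(s))}(x(s))$, so the tangent cone at $x(s)$ is the opposite right-angled orthant. Writing $n_1(s)=x'(s),n_2(s),n_3(s)$ for the three mutually orthogonal unit normals spanning $N(x(s))$, this records two facts: the $n_i(s)$ are three mutually orthogonal outward normals supporting $\mathrm{co}(\ga(s))$ at $x(s)$, and the convex hull has three edges issuing from $x(s)$ along $-n_1,-n_2,-n_3$, the edge $-n_1=-x'(s)$ being the incoming curve. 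It is worth isolating here why the configuration is so constrained: the normal cone at the moving endpoint of a bare segment is a half-space, for which $\Phi(\cdot,\text{pole})=\frac12$; to bring the value down to $\frac18$ one must squeeze $N(x(s))$ from a hemisphere to an orthant, which forces the two auxiliary planes with normals $n_2,n_3$ to remain tangent to $\mathrm{co}(\ga(s))$ at \emph{earlier} curve points $x(\sigma_2(s)),x(\sigma_3(s))$.

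The heart of the matter, and the step I expect to be the main obstacle, is to show that this configuration cannot be sustained while $s$ runs over a set of positive measure. Here the contrast with $n=2$ must be exploited decisively: for $n=2$ the self-dual minimiser is a quarter-plane, which enjoys a full one-parameter rotational freedom, and the logarithmic spiral of Example \ref{ex1} tracks an extreme ray of a continuously rotating quarter-plane at every point, realising equality everywhere; for $n=3$, by contrast, Theorem \ref{n=3equality} pins the minimiser down to an orthant, whose symmetry group is discrete and whose three vertices sit at fixed mutual angle $\pi/2$. Thus keeping $N(x(s))$ equal to an orthant with $x'(s)$ at a vertex prescribes a path $s\mapsto(n_1(s),n_2(s),n_3(s))$ in the orthonormal frame bundle subject to $x'(s)=n_1(s)$. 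I would pass to a Lebesgue point $s_0\in E$ of $x'$, so that near $s_0$ the curve is almost straight in the direction $n_1(s_0)$, and then differentiate along $E$ the contact identities $\langle x(\sigma_i(s))-x(s),n_i(s)\rangle=0$ together with the orthonormality relations $\langle n_i,n_j\rangle=\delta_{ij}$ and $n_1=x'$, using \eqref{diffsep} to control the positions of the second contact points. The essential tension to expose is that the evolving tangency with $x(\sigma_2),x(\sigma_3)$ forces the orthant to rotate, whereas its vertex must stay exactly aligned with the tangent of a finite-length steepest descent curve — the a priori width bound, through Proposition \ref{selfsecrettifiable}, caps $||\ga||\le c^{(1)}_3\,W$ and hence the total rotation of the frame available to the orthant. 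Showing that these requirements are mutually incompatible on any positive-measure set is the crux; as an alternative route I would consider extracting, via Theorem \ref{seqpath} and the compactness of $\Ga_W$, a limit curve on which equality holds on a full interval, and then contradicting the discreteness of the orthant's vertices directly.

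Once the contradiction is in hand, $E$ must be null, so $\frac{d}{ds}w_\ga(s)>\frac18=\Phi(S_0,u_0)$ for a.e. $s$ and every $\ga\in\Ga_W$, which is the assertion. I emphasise that the delicate point throughout is to make the octant rigidity quantitative: one is not merely ruling out that the normal cone is self-dual (which, as the two-dimensional spiral shows, can persist on an interval) but that it is exactly the right-angled orthant demanded by the equality case of Theorem \ref{n=3equality}, on a set of positive measure.
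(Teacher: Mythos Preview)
Your reduction is correct up to the point where you conclude that, at any $s$ in the putative equality set $E$, the tangent cone $T_{co(\ga(s))}(x(s))$ is an orthant and $x'(s)$ exists. From there, however, your argument takes an unnecessarily global turn: you try to track the rotation of an orthonormal frame along $E$ and to manufacture a contradiction from the bounded total variation afforded by the width bound. This is where the gap lies --- nothing you write actually produces a contradiction, and you yourself flag this step as ``the main obstacle'' without resolving it. In particular, the remark that the orthant's symmetry group is discrete is a red herring: nobody is asking the orthant to be fixed, and a continuously rotating orthant with one vertex tracking $x'(s)$ is, a priori, compatible with all the constraints you list (that is precisely what happens for the quarter-plane in $n=2$).

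The paper's argument avoids all of this by proving something much stronger and purely pointwise: at \emph{any single} $s$ where $x'(s)$ exists, the tangent cone cannot be an orthant. The reasoning is elementary. If the tangent cone at $x(s)$ is an orthant with mutually orthogonal edges $v_1,v_2,v_3$, then each edge must be witnessed by the past curve in one of two ways: either (i) there is some $s^j<s$ with $\dfrac{x(s^j)-x(s)}{|x(s^j)-x(s)|}=v_j$, or (ii) there is a sequence $s^j_k\nearrow s$ with $\dfrac{x(s^j_k)-x(s)}{|x(s^j_k)-x(s)|}\to v_j$. By pigeonhole, at least two of the three edges fall into the same case. Two edges in case (i) place two earlier curve points $x(s^j),x(s^{j'})$ at a right angle as seen from $x(s)$, contradicting the strict inequality \eqref{angcond} of Proposition~\ref{proprsdc}. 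Two edges in case (ii) force the left difference quotients of $x$ at $s$ to accumulate at two orthogonal directions, so $x'(s)$ cannot exist. Either way one gets a contradiction at the single point $s$: no measure-theoretic argument over $E$, no frame-bundle differentiation, and no compactness extraction is needed --- the equality set is empty within the set of differentiability, and the strict inequality follows a.e.
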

\begin{proof}
The proof is an immediate consequence of the following claim.
\begin{claim}\label{tangentorthant} For $n=3,$
for every curve $\ga \in \Ga, $  the tangent cone to $
co(\ga(s)) $ cannot be an orthant at any point where $x'(s)$ exists.
\end{claim}

Let us argue by contradiction. Let us assume that in a point $ x(s) $
of the curve $\ga$:
$x'(s)$ exists and
 the tangent cone to $co(\ga(x(s)))$ is an orthant, generated by
three unit vectors $v_1,v_2,v_3$ orthogonal to each other.
 As the
tangent cone has an edge for each $ v_j, j=1,2,3, $ by definition of
tangent cone, there are  two possible cases (for each $ j=1,2,3 $):
(i) there exists $ x(s^j)\in \ga, s^j<s $ satisfying
$$
\frac{x(s^j)-x(s)}{|x(s^j)-x(s)|} = v_j;
$$
(ii) there  exists a sequence $x(s_{k}^j) \in \ga $ with $s_{k}^j< s$
and $\lims_{k\to \infty}x(s_{k}^j)= x(s)   $ satisfying:
$$
\lims_{k\to \infty} \frac{x(s^j_k)-x(s)}{|x(s^j_k)-x(s)|} = v_j.
$$

There are two possibilities.
\begin{enumerate}
 \item case (i) for at least two edges; then the triangle joining $x(s)$ with two of the $x(s^j)$ is rectangle in $x(s)$
and this is impossible by   \eqref{angcond};
\item case (ii) for at least two edges; then $x'(s) $ cannot exist.
\end{enumerate}
\hfill\end{proof}

Let us notice, however the following example.
\begin{ex}\label{bastard}
Let $ n=3,\; W>0 $ and $ S_0, u_0 $ as defined in the previous
theorem. There exists $ \widehat{\ga} \in \Ga_W, $ $ \widehat{s} \in (0,
||\widehat{\ga} ||), $ and a sequence $ 0 < s_m \searrow \widehat{s} $ so
that
$$
 \lim_{m \rightarrow \infty} \frac{dw(co(\widehat{\ga}(s_m)))}{ds}= \Phi(S_0,u_0) = \frac{1}{8}.
$$
\end{ex}
\begin{proof}
 In Example \ref{ex1}
let us choose $\mu =\frac{W}2\frac{\sqrt{1+\om^2}}{\om}.$
Let  $ \widehat{\ga}$ the curve in $ \RR^3 $ in curvilinear abscissa $ s
$ defined as
$$
x(s) = \left(\begin{array}{c} 0 \\0 \\ s-W/2  \end{array} \right)
\qquad \qquad 0 \leq s \leq W/2,
$$
$$
 \qquad x(s) = \left(\begin{array}{c} \frac{\om \, \sigma}
 {\sqrt{1 + \om^2}} \cos \eta(\sigma) \\
\frac{\om \, \sigma}{\sqrt{1 + \om^2}} \sin \eta(\sigma) \\ 0
\end{array} \right) \quad \sigma = s-W/2, \; W/2 <s \leq \frac{W}2(\frac{\sqrt{1+\om^2}}{\om}+1)
$$
($ \eta $ defined in (\ref{eta})). The  curve  $ \widehat{\ga}$ is the
union of the oriented segment joining $(0,0, -W/2 )^t $ to $(0,0, 0)^t,$ 
followed by the logarithmic spiral $\ga^{(\mu)}$
(defined in Example \ref{ex1}) on the plane $x_3 =0.$ 
 The ball  centered in $(0,0,0)^t$ and radius $W/2$ contains $co(\widehat{\ga})$, thus $ \widehat{\ga}\in \Ga_W. $
Let  $\widehat{s} = W/2$ and $ s_m \searrow \widehat{s};
$ the tangent cone $ T_m $ to co($\widehat{\ga}(s_m ))$ is a trihedron
with vertex in $ x(s_m), $ two orthogonal edges on the $ x_1x_2$ plane,
containing $(0, 0 ,0)^t;$ the third edge is the half line starting in
$ x(s_m) $ and containing $( 0, 0, -W/2)^t $. As $ x(s_m) $ is on the $ x_1x_2$ plane and
tends to the origin, then $ T_m, $  as $ m \rightarrow \infty, $
tends to an orthant; thus $ \widehat{N(x(s_m))} $ tends to an
orthant too. By  \eqref{MP} and Theorem \ref{n=3equality},
the thesis follows.
\end{proof}

\vspace{1cm}

Let us consider now another related variational problem. A
consequence of \cite[theorem VII]{Manselli-Pucci} and its
generalization to the class of steepest descent curves (Proposition
\ref{selfsecrettifiable}), is that the arc length  of a steepest
descent curve $\ga$ is bounded by $w(co(\ga))$.

 Here a simple but sharper version of previous result is proved.

 Let $W>0$, $ \mathfrak{H}_W $ be a class of convex bodies contained  in $ \RR^n $ satisfying the
 conditions: (i) if $ K \in \mathfrak{H}_W $ and  $\ga \in \Ga$ is
 contained in $K $ then $ co(\ga) \in \mathfrak{H}_W; $ (ii)
$ \sup_{_{K \in \mathfrak{H}_W }}  w(K) \leq W;$ (iii) the class is closed with respect to the
 Hausdorff metric.

Examples of  classes with the  above properties are: \\
(a) $
\mathfrak{H}^{(1)}_W: $ the family of all convex bodies $ K $ of $
\RR^n $ satisfying $ w(K) \leq W; $
\\ (b) $ \mathfrak{H}^{(2)}_W: $
the family of all convex bodies $ K $ of $ \RR^2 $ contained in the
circle (centered in the origin) of radius $ W/2.$

Let $ \tilde{\Ga} \subset \Ga, $ be a family of steepest descent curves,
closed for the uniform convergence. As an example $ \tilde{\Ga} $ could be
the family of the curves of $ \Ga $ with the same starting point.

Let $ W>0, $ $ \mathfrak{H}_W $ be a class of convex bodies $ K $ in
$\rn$, satisfying (i), (ii), $\ga \in \tilde{\Ga} \subset \Ga,$ $\ga
\subset K . $

 Let us  consider a variational problem for  the  functional
\begin{equation}\label{probvargaK}
\Psi(\ga,\al, K)= \dfrac{||\ga||}{(w(K))^\al}, \quad  \ga \in
\tilde{\Ga}, \quad \ga \subset K \in  \mathfrak{H}_W, \quad 0 \leq
\al \leq 1.
\end{equation}
\begin{theorem}\label{maxprobvargaK} Let $ 0 \leq
\al \leq 1, W >0.$  Given  $\tilde{\Ga}$ ( closed with respect to the $||\cdot ||$ convergence), $\mathfrak{H}_W$,  there exist a curve $\ga_0\in \tilde{\Ga}$ and  a
 convex body ${K_0}=co(\ga_0) \subset \RR^n,$ $ K_0 \in
 \mathfrak{H}_W, $
 such that
$$\Psi({\ga_0},\al,{K_0})=\max \{\Psi(\ga,\al, K),\ga \in \tilde{\Ga}, \ga \subset K \in \mathfrak{H}_W \}.$$
\end{theorem}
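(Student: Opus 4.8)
The plan is to run the direct method of the calculus of variations, with Proposition \ref{selfsecrettifiable} supplying the a priori bounds and Theorem \ref{seqpath} keeping the limit inside $\tilde{\Ga}$. First I would reduce the two--variable problem to a problem in $\ga$ alone by noting that, for fixed $\ga$, the value of $\Psi$ is largest when $K=co(\ga)$: for any admissible $K\supseteq \ga$ one has $co(\ga)\subseteq K$, hence $w(co(\ga))\le w(K)$ by monotonicity of the mean width \eqref{meanwidthdef}, and since $0\le\al\le 1$ this gives $\Psi(\ga,\al,K)\le \Psi(\ga,\al,co(\ga))$, while $co(\ga)\in\mathfrak{H}_W$ by hypothesis (i). Thus it suffices to maximize $\psi(\ga):=||\ga||/(w(co(\ga)))^\al$ over $\ga\in\tilde{\Ga}$, the optimal body being $K_0=co(\ga_0)$. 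Finiteness of the supremum $M$ then follows from Proposition \ref{selfsecrettifiable}, since $||\ga||\le c^{(1)}_n\, w(co(\ga))$ together with (ii) yields $\psi(\ga)\le c^{(1)}_n\,(w(co(\ga)))^{1-\al}\le c^{(1)}_n\,W^{1-\al}<\infty$.

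\emph{Compactness.} Next I would take a maximizing sequence $\ga_m\in\tilde{\Ga}$ with $\psi(\ga_m)\to M$ and reparametrize each $\ga_m$ by its mean width rescaled to $[0,1]$, writing $x_m(t)$ for the point of $\ga_m$ of mean width $t\,w(co(\ga_m))$. By the Lipschitz bound of Proposition \ref{selfsecrettifiable} these parametrizations are Lipschitz with a common constant $\le c^{(1)}_n W$; since each curve has diameter $\le ||\ga_m||\le c^{(1)}_n W$, I may, after a translation (which changes neither $||\ga||$, $w(co(\ga))$ nor the steepest descent property --- normalizing the starting points, as in the model case where $\tilde{\Ga}$ consists of curves with a common initial point), assume they all lie in a fixed ball. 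Arzel\`a--Ascoli then gives a subsequence with $x_m\to x_0$ uniformly. By Theorem \ref{seqpath} the map $x_0$ parametrizes a curve $\ga_0\in\Ga$, and since $\tilde{\Ga}$ is closed for uniform convergence, $\ga_0\in\tilde{\Ga}$. Moreover $co(\ga_m)\to co(\ga_0)$ in the Hausdorff metric, so $w(co(\ga_m))\to w(co(\ga_0))=:w_0$ by continuity of the mean width. When $M>0$ and $\al<1$ the inequality $M\le c^{(1)}_n (w(co(\ga_m)))^{1-\al}$ keeps $w(co(\ga_m))$ bounded away from $0$, so $w_0>0$ and $\ga_0$ is non-degenerate; for $\al=1$ one first uses the scale invariance of $\Psi$ to normalize the mean widths.

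\emph{Passing the length to the limit.} It remains to prove $\psi(\ga_0)=M$, for which the one nontrivial point is $||\ga_m||\to||\ga_0||$. Uniform convergence gives for free the lower semicontinuity $||\ga_0||\le\liminf_m||\ga_m||$, which is the \emph{wrong} direction for a maximization; the heart of the matter is to rule out loss of length, i.e. to prove $||\ga_0||\ge\limsup_m||\ga_m||$. This is exactly where the rigidity of steepest descent curves is used: the monotonicity \eqref{diffsep} (equivalently Proposition \ref{fondproprsdc} and the angle condition \eqref{angcond}) forbids the oscillations responsible for length dropping. Concretely, writing $||\ga_m||=\int_0^1|\dot x_m(t)|\,dt$ with $|\dot x_m|\le c^{(1)}_n W$, I would upgrade the weak-$*$ convergence $\dot x_m\rightharpoonup\dot x_0$ to a.e.\ convergence of the velocities --- exploiting that in this parametrization the unit tangent lies in the normal cone \eqref{inNco(ga(t))} and that both the tangents and the normal cones converge along with the Hausdorff convergence of the hulls --- and then conclude $||\ga_m||\to||\ga_0||$ by dominated convergence. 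Granting this, $\psi(\ga_m)=||\ga_m||/(w(co(\ga_m)))^\al\to ||\ga_0||/w_0^\al=\psi(\ga_0)$, whence $\psi(\ga_0)=M$ and $(\ga_0,K_0)=(\ga_0,co(\ga_0))$ is the sought maximizer. I expect this last step --- the genuine continuity, rather than mere lower semicontinuity, of the length functional along steepest descent curves --- to be the main obstacle; the rest is the standard compactness scheme.
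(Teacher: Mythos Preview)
Your strategy is the paper's: reduce to $K=co(\ga)$ via monotonicity of the mean width and hypothesis (i), bound $\Psi$ above by $c^{(1)}_n W^{1-\al}$ using Proposition \ref{selfsecrettifiable}, take a maximizing sequence, reparametrize on $[0,1]$, extract a uniformly convergent subsequence by Arzel\`a--Ascoli, and land back in $\tilde{\Ga}$ by Theorem \ref{seqpath} together with the closedness of $\tilde{\Ga}$ and $\mathfrak{H}_W$. The one structural difference is the choice of parameter: the paper rescales \emph{arc length} to $[0,1]$, so that $|x_m'|\equiv \|\ga_m\|$ a.e., whereas you rescale the mean width. The paper's choice is a little tidier here because the length of $\ga_m$ then appears as the single number $\|\ga_m\|$ rather than as the integral of a variable speed.

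On the one substantive issue you single out---continuity of the length along the maximizing sequence---you are actually more careful than the source. The paper dispatches the passage to the limit in one sentence (``the functional $\Psi$ is continuous in its arguments'') and gives no further justification; your observation that lower semicontinuity of length points the wrong way for a supremum, and that it is precisely the rigidity encoded in Propositions \ref{fondproprsdc}--\ref{proprsdc} that should prevent loss of length, identifies exactly the point the paper leaves implicit. In the paper's constant--speed parametrization the same obstacle reads: if $\|\ga_m\|\to L$, does the uniform limit $x_0$ still satisfy $|x_0'|=L$ a.e.? Your proposed mechanism (upgrading weak-$*$ to a.e.\ convergence of tangents via convergence of normal cones and \eqref{inNco(ga(t))}) is a reasonable plan but, as you yourself flag, would need to be fleshed out; the paper offers no alternative argument. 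In short: same proof, with you being more explicit about its one soft spot.
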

\begin{proof}

 Let $ \ga $ and $ K $ satisfying: $ \ga \in \tilde{\Ga}, \ga \subset K \in \mathfrak{H}_W   $. As $
w(co(\ga)) \leq w(K) \leq W,$ Proposition \ref{selfsecrettifiable}
implies that
$$||\ga||\leq c^{(1)}_n W,$$
thus
$$
 \Psi(\ga,\al, K) =  \frac {||\ga||}{(w(K))^\al } \leq \frac {||\ga||}{(w(co(\ga)))^\al }
  = \Psi(\ga, \al, co(\ga))\leq
c^{(1)}_n W^{1-\al}.
$$
Thus the functional $ \Psi $ is bounded above in $ \mathfrak{H}_W. $
 Let $ \{ \ga_m \} $ an extremum sequence
 $$
\Psi(\ga_m, \al, co(\ga_m)) \nearrow \sup \{ \Psi(\ga, \al, K) , \ga
\in \tilde{\Ga}, \ga \subset K \in \mathfrak{H}_W ) \}.
 $$

Let us parameterize $  \ga_m  $ as
$$
\ \quad  x_m(t^{(m)}),   \quad t^{(m)}:=\frac{s}{|| \ga_m  ||}
\in [0,1], \quad 0 \leq  s \leq || \ga_m   ||, \quad s \mbox{ arc
length abscissa to } \ga_m.
$$
The functions  $ \{x_m \} $ are Lipschitz functions in $ [0,1], $
satisfying a.e. $  ||x'_m|| \leq  || \ga_m   || \leq c^{(1)}_n W. $
Thus, by Ascoli Arzel\'{a}
theorem, a subsequence of $ \{x_m \} $ uniformly converges to a
function $ x_0 $ parametric representation of a steepest descent
curve $ \ga_0 $ (Proposition \ref{seqpath}). As the functional $
\Psi $ is continuous in its arguments, by the assumptions on the class $\tilde{\Ga}$ and $\mathfrak{H}_W$ the thesis follows.
\end{proof}
The previous theorem  merely states the existence of a maximizing
steepest descent curve $ \ga_0 $  for the functional \eqref{probvargaK} in the
class $ \tilde{\Ga}, \mathfrak{H}_W. $ Find explicitly $ \ga_0 $  is
another problem.

 In \cite{Manselli-Pucci}, for $ n=2 , \al =1 $ and $ \mathfrak{H}_W
=\mathfrak{H}^{(1)}_W, $ $ \tilde{\Ga}=\Ga, $ a maximizing curve (the
logarithmic spiral in  Example \ref{ex1}) was found.

The results of Theorem \ref{bad} imply that the techniques used in
\cite{Manselli-Pucci} cannot be used for  $n=3. $ The problem to
find a maximizing steepest descent curve $ \ga_0 $  for the
functional \eqref{probvargaK} in the class $ \Ga,
\mathfrak{H}^{(1)}_W $ remains open.

Furthermore,  for $ n=2,  \al =0,  \mathfrak{H}_W
=\mathfrak{H}^{(2)}_W, $ $ \tilde{\Ga} $ the family of curves  of $
\Ga $ starting in the center of the circle, it was noticed in
\cite{Manselli-Pucci} that there is a steepest descent curve
$\tilde{\ga}: \tilde{x}(s), 0\leq s \leq 1+\pi$ where:
$$
\tilde{x}=(s,0) \quad 0\leq s< 1, \quad
\tilde{x}=(\cos(s-1),\sin(s-1)) ,
 1 \leq s \leq 1+\pi,
 $$
longer than the spiral $\ga^{(\mu)}$ quoted above. In \cite{IKL}
it has been computed the supremun (not maximum) of $||\ga||, \ga \in \mathfrak{H}^{(2)}_W$
and an extremal sequence is explicited constructed.


\begin{thebibliography}{99}
\bibitem{Barker} G.P.~Barker, J.~Foran, {\em Self dual cones in Euclidean Spaces},
Linear Algebra and its Applications  {13} (1976), 147-155.


\bibitem{Bonnfen} T.~Bonnesen, W.~Fenchel, {\em Theory of Convex Bodies}, BCS
 Associates, 1987.
\bibitem{Bezdek} K.~Bezdek, {\em Illuminating spindle convex bodies and minimizing the volume of spherical sets of constant width},
Discrete and Computational Geometry  47(2012), 275-287.
\bibitem{Daniilidis3} A.~Daniilidis, G.~David, E.~Durand-Cartagena, A.~Lemenant,
{\em Rectifiability of self-contracted curves in the euclidean space
and applications}, arXiv:1211.3227.
\bibitem{Daniilidis2}  A.~Daniilidis, Y.~Garcia Ramos, {\em Some remarks on the
class of continuous (semi-)strictly quasiconvex functions}, J. Optim. Theory Appl.
 133(2007), 37-48.

\bibitem{Daniilidis}  A.~Daniilidis, O.~Ley, S.~Sabourau, {\em Asymptotic behaviour
 of self-contracting planar curves and gradient orbits of convex functions}, J. Math.
  Pures Appl. 94(2010), 183-199.

\bibitem{Dekster}  B.V. Dekster, {\em Completeness and constant width in spherical and hypebolic spaces}, Acta Math.
Hungar. 67(4) (1995), 289-300.

\bibitem{fenc} W. Fenchel, {\em Convex Cones, Sets and Functions}, Princeton
Univ. Press, 1953.
\bibitem{GiaSpi} C. Giannotti, A.~Spiro, {\em Steepest descent curves on convex surfaces of constant curvatures}, 
Israel J. of Math. 2011. DOI: 10.1007/s11856-011-0206-2.
\bibitem{IKL} C.~Icking, R.~Kleine, E.~Langetepe, {\em Self-approaching curves}, Math Proceeding Cambridge Philos Soc 125(1999), 441-453.
\bibitem{Leichtweiss} K.~Leichtweiss, {\em Curve of Constant Width in the Non-Euclidean Geometry},
Abh. Math. Sem. Univ. Hamburg 75 (2005), 257-284.

\bibitem{LMV} M.~Longinetti, P.~Manselli, A.~Venturi,
\emph{On   steepest descent curves for quasi convex families in $\RR^n$},
arXiv:1303.3721.

\bibitem{Manselli-Pucci} P.~Manselli, C.~Pucci,
\emph{Maximum length of Steepest descent curves for Quasi-convex Functions},
Geometriae Dedicata 38(1991), 211-227.
\bibitem{Santalo'} L.A.~Santal\`o,\emph{Convex regions on the $n$-dimensional spherical surface},
Ann. of Math. 47(1946),448-459.
\bibitem{Schn} R. Schneider,
\emph{Convex bodies: the Brunn-Minkowski Theory}, Encyclopedia of Mathematics
and Its Applications, vol. 44. Cambridge University Press, Cambridge, 1993.
\bibitem{Stoll} W. Stoll, \emph{\"Uber der Kappenk\"orper eines konvexen k\"orper},
Comm. Math. Helv., 2(1930), 35-68.





\end{thebibliography}
\end{document}